\newcommand{\R}{\mathbb{R}}
\newcommand{\N}{\mathbb{N}}
\newcommand{\joli}[1]{\mathcal{#1}}
\newcommand{\E}{\joli{E}}
\newcommand{\A}{\joli{A}}
\newcommand{\norm}[1]{\Vert #1 \Vert}
\newcommand{\norminf}[1]{\Vert #1 \Vert_\infty}
\newcommand{\transp}{\mathrm{T}}
\newcommand{\Idn}{\ensuremath{\mathbb{I}_{n\times n}\xspace}}
\newcommand{\Idm}{\ensuremath{\mathbb{I}_{m\times m}\xspace}}
\newcommand{\ii}{\mathrm{i}}
\newcommand{\dd}{\mathrm{d}}
\newcommand\encircle[1]{%
	\tikz[baseline=(X.base)] 
	\node (X) [draw, shape=circle, inner sep=0] {\strut #1};}
\tikzset{fontscale/.style = {font=\relsize{#1}}}
\theoremstyle{plain}
\newtheorem{theorem}{Theorem}[section]
\newtheorem{lemma}[theorem]{Lemma}
\newtheorem{proposition}[theorem]{Proposition}
\theoremstyle{definition}
\newtheorem{definition}[theorem]{Definition}
\newtheorem{example}{Example}
\newtheorem{remark}{Remark}
	\newcommand{\qed}{\nobreak \ifvmode \relax \else
		\ifdim\lastskip<1.5em \hskip-\lastskip
		\hskip1.5em plus0em minus0.5em \fi \nobreak
		\vrule height0.75em width0.5em depth0.25em\fi}
\newcounter{todocounter}
\colorlet{shadecolor}{ gray!15 }
\begin{document}
\author{Marcelo Forets$^{1}$ \and Amaury Pouly$^{2}$}
\date{\today}
\title{Explicit Error Bounds for Carleman Linearization}
\footnotetext[1]{VERIMAG, Université Grenoble Alpes, France.}
\footnotetext[2]{Max-Planck Institute for Software Systems, Saarbrucken, Germany.}
\maketitle

\begin{abstract}
We revisit the method of Carleman linearization for systems of ordinary differential equations with polynomial right-hand sides. This transformation provides an approximate linearization in a higher-dimensional space through the exact embedding of polynomial nonlinearities into an infinite-dimensional linear system, which is then truncated to obtain a finite-dimensional representation with an additive error. To the best of our knowledge, no explicit calculation of the error bound has been studied. In this paper, we propose two strategies to obtain a time-dependent function that locally bounds the truncation error. In the first approach, we proceed by iterative backwards-integration of the truncated system. However, the resulting error bound requires an a priori estimate of the norm of the exact solution for the given time horizon. To overcome this difficulty, we construct a combinatorial approach and solve it using generating functions, obtaining a local error bound that can be computed effectively.
\end{abstract}

\paragraph{Keywords:} 

carleman linearization, polynomial ODEs, infinite-dimensional systems, guaranteed integration, nonlinear control theory.

\section{Introduction}
\label{sec:Introduction}

In 1932, Carleman devised a method \cite{carleman1932application} (now known as Carleman linearization)
to embed a nonlinear system of differential equations $x'(t)=f(x(t))+u(t)g(x(t))$ of finite dimension
into a system of bilinear differential equations 
\begin{equation}\label{eq:infdimSystem}
y'(t)=\mathcal{A}y(t)+u(t)\mathcal{B}y(t)
\end{equation} 
of \emph{infinite dimension}. 
By truncating the obtained bilinear system at finite orders, one obtains a systematic
way of creating arbitrary-order approximation of the solutions of the nonlinear system.
In particular, when put together with results on the Volterra series of bilinear systems,
it provides an effective way of computing the Volterra series for a large class
of nonlinear systems where $f$ and $g$ are analytic. This approach was initiated in \cite{Brockett1976}
and refined in a number of papers in the years that followed. It has been particularly
successful for proving results about the observability \cite{mozyrska2008carleman, sira1988algebraic}, stability \cite{LoparoB1978} and controllability \cite{mozyrska2006dualities} of nonlinear
systems.  More recently these methods have been applied in stochastic state estimation and controller design \cite{germani2005filtering, rauh2009carleman} and model order reduction \cite{bai2002krylov}. See \cite{Brockett2014} for a survey on the subject. In the particular case where the input $u$ is a nonlinear feedback, the above
approach can be refined to obtain more explicit convergence and stability conditions
such as in \cite{LoparoB1978}.

Linear systems of infinite-dimensional ODEs were first studied in the late 1940's and early 1950's \cite{arley1944theory, bellman1947boundedness}. Those initial works focused on the study of existence and uniqueness of solutions. In later works, Chew, Shivakumar and Williams \cite{chew1980error}, and Shivakumar \cite{shivakumar1988iterative} provided error formulas for the difference between the solution of the exact ODE and the solution obtained by finite order truncations. Well-posedness of the Cauchy problem of infinite-dimensional ODEs was studied by Borok \cite{Bor82}. Through an extension based on using the logarithmic norm, Marinov \cite{marinov1986truncation} obtained a time-dependent error bound which converges to zero for certain classes of systems. These bounds apply to the general case of bounded operators, e.g.~when $\mathcal{A}$ is a bounded operator in the sequence space $\ell_1$. We refer to the monograph \cite{shivakumar2016infinite} for applications of that theory. However, the matrix operator \eqref{eq:infdimSystem} obtained though Carleman linearization is in general unbounded, and these results do not apply.

An important subclass of nonlinear systems are polynomial differential equations.
Indeed, many systems can be rewritten as polynomial vector fields by introducing
more variables and, in fact, any polynomial system can be reduced to a second-order
polynomial one \cite{CPSW05, hernandez1998algebraic}. Convergence of the infinite-dimensional exponential map associated to a polynomial vector field was studied by Winkel \cite{winkel1997exponential}. An error estimate is given, but it is coarse, i.e.~it is time-independent. The Volterra series approach yields some interesting bounds for polynomial systems \cite{Bullo200}. But as far as we
are aware, previous results on the subject are only concerned with proving that the approach is
sound \cite{krener1974linearization}, i.e. the truncated linearization converges  as the order increases,
and not in obtaining explicit bounds. In particular, a number of
bounds based on operator norms cannot easily be evaluated on a computer except for special classes or systems.

In this paper, we consider the nonlinear ordinary differential equation
\begin{equation}
	x'(t)= f(x(t)),
\end{equation}
where $f \in \R^n[x]$ is a polynomial vector field with domain $\R^n$, and $f(0)=0$. We revisit
the Carleman linearization method for such systems and give an error bound based for the truncated linear system based on backwards-integrating Volterra
series, a generalization of the technique in \cite{bellman1962some}. We then develop an alternative approach by studying the power series of the solution, and obtain an explicit formula for the error bound using generating functions. The obtained bound can be exponentially better than the first one as the order
increases. Moreover, these bounds have a very simple and explicit expression that can be used in a numerical algorithm.

We can summarize the contributions of this article as follows:

\begin{itemize}
	\item We provide an upper bound for the truncation error of the Carleman linearization method. This bound is explicit in the system's coefficients and initial state, but depends on an a priori estimate of the norm of the solution. It is obtained by estimating the solution of the truncated Volterra series.
	
	\item Then, we provide a second upper bound which is explicit, i.e.~it does not depend on any a priori estimate of the exact solution. We obtain this bound constructing the series for the error term and solving the general recurrence using generating functions.
\end{itemize}

In  both cases, our formulas apply to any polynomial ODE with the origin as a stationary point. The results are obtained through an adequate reformulation into a quadratic polynomial ODE. We implemented the required transformations, and the explicit error formulas, in the computer algebra system SageMath \cite{sagemath}.

We begin in Section \ref{sec:Preliminaries} with preliminaries: Kronecker product, Kronecker power and the logarithmic norm. We continue with the formalism for the Carleman embedding method \ref{sec:CarlemanEmbedding}. We formalize our explicit error bounds in Section \ref{sec:MainResults}, and provide the proofs in Section \ref{sec:Proofs}. We conclude and comment on ideas for future work in Section \ref{sec:Conclusion}.

\section{Preliminaries}
\label{sec:Preliminaries}

In this section we setup the notations used in this paper, review the Kronecker product for vectors and matrices, and recall the definition and basic properties of the logarithmic norm of a matrix.

\subsection{Notation}
\label{ssec:Notation}

Let $\N = \{ 1,2,\ldots \}$ be the set of positive integers and $\R$ the set of real numbers. Let $\Idn$ denote the identity matrix of order $n$. Hereafter, $\norm{\cdot}$ denotes the supremum norm in the Euclidean space $\R^n$, 
\begin{equation*}
\norm{x} := \norminf{x} = \max_{i} |x_i|,\qquad x \in \R^n. \label{def:xNormInf}
\end{equation*}
For the norm of a matrix $A = (a_{ij}) \in \R^{n\times n}$ we refer to the induced norm, namely 
\begin{equation}
\norm{A} := \max_{\Vert x \Vert = 1} \Vert A x \Vert = \max_{i} \sum_{j} |a_{ij}| \label{prop:SupNormInduced},
\end{equation}
which is the maximum absolute row sum of the matrix.

\subsection{Kronecker product and Kronecker power}
\label{ssec:KroneckerProduct}


For any pair of vectors $x \in \R^n$, $y\in \R^m$, their \textit{Kronecker product}\footnote{For further properties on the Kronecker product than those recalled here, we refer to \cite{zhang2011matrix} or \cite{steeb2011matrix}.} $w \in \R^{mn}$ is
$$
w = x\otimes y = (x_1 y_1, x_1 y_2,\ldots, x_1 y_m, x_2 y_1,\ldots,x_2 y_m, \ldots, x_n y_1,\ldots, x_n y_m)^\transp.
$$
This product is not commutative. For matrices the definition is analogous: if $A \in \R^{m\times n}$ and $B \in \R^{p \times q}$, then $C \in \R^{mp \times nq}$ is
\begin{equation*}
C = A \otimes B = \begin{pmatrix}
a_{11}B &\ldots&a_{1n}B \\
\vdots & & \vdots \\
a_{m1}B & \ldots &a_{mn}B 
\end{pmatrix}.
\end{equation*} 
We recall next that the supremum norm satisfies the \emph{crossnorm property} \cite{lancaster1972norms}.
 
\begin{lemma} \label{lemma:crossnormSupMatrixNorm}
Let $A = (a_{ij}) \in \R^{m\times n}$, $B  = (b_{ij}) \in \R^{p\times q}$. Then $\Vert A\otimes B\Vert = \Vert A\Vert~\Vert B \Vert.$

\begin{proof}
The matrix elements of $A\otimes B$ are 
\begin{equation*}
(A\otimes B)_{p(r-1)+v, q(s-1)+w} = a_{rs} b_{vw}, 
\end{equation*}
for all $1\leq r\leq m$, $1\leq s \leq n$, and $1\leq v\leq p$, $1\leq w \leq q$. From \eqref{prop:SupNormInduced}, 
\begin{subequations}
\begin{align*}
\norm{A\otimes B} &= \max_{1\leq r \leq m} \max_{1\leq v \leq p} \sum_{s=1}^n\sum_{w=1}^q |(A\otimes B)_{p(r-1)+v, q(s-1)+w}| \\
&= \max_{1\leq r \leq m} \max_{1\leq v \leq p} \sum_{s=1}^n\sum_{w=1}^q |a_{rs}|~|b_{vw}| \\
&= \max_{1\leq r \leq m} \max_{1\leq v \leq p} \sum_{s=1}^n |a_{rs}| ~ \sum_{w=1}^q |b_{vw}| \\
&= \max_{1\leq r \leq m} \sum_{s=1}^n |a_{rs}|  ~ \max_{1\leq v \leq p}  \sum_{w=1}^q |b_{vw}|  \\
&= \norm{A}~\norm{B}.
\end{align*} 
\end{subequations} 

\end{proof}

\end{lemma}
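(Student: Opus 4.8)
The plan is to reduce the statement entirely to the explicit row-sum formula for the induced supremum norm recorded in \eqref{prop:SupNormInduced}, and then to exploit the block structure of the Kronecker product to factor the resulting maximum. Since $\norm{M}$ is nothing but the maximum absolute row sum of $M$, everything comes down to understanding the rows of $A\otimes B$ one at a time.

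First I would index the rows of $A\otimes B$ by pairs $(r,v)$ with $1\le r\le m$ and $1\le v\le p$, and its columns by pairs $(s,w)$ with $1\le s\le n$ and $1\le w\le q$, via the standard identification of the row index $p(r-1)+v$ and the column index $q(s-1)+w$. In this indexing the defining relation $(A\otimes B)_{p(r-1)+v,\,q(s-1)+w}=a_{rs}b_{vw}$ makes the absolute row sum of a single row $(r,v)$ completely transparent.

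Next I would compute that absolute row sum. Summing $|a_{rs}b_{vw}|$ over all column pairs $(s,w)$ and using that the $a$-factor depends only on $s$ while the $b$-factor depends only on $w$, the double sum separates as $\bigl(\sum_s|a_{rs}|\bigr)\bigl(\sum_w|b_{vw}|\bigr)$. In other words, the $(r,v)$-th absolute row sum of $A\otimes B$ is exactly the product of the $r$-th absolute row sum of $A$ and the $v$-th absolute row sum of $B$.

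Finally I would maximize over $(r,v)$. The only slightly delicate point is that the maximum of a product over two independent indices factors as $\max_{r,v} f(r)g(v)=\bigl(\max_r f(r)\bigr)\bigl(\max_v g(v)\bigr)$; this is where nonnegativity is essential, since all row sums are nonnegative, the product is maximized by maximizing each factor separately and no cancellation can intervene. Identifying $\max_r\sum_s|a_{rs}|=\norm{A}$ and $\max_v\sum_w|b_{vw}|=\norm{B}$ then gives $\norm{A\otimes B}=\norm{A}\,\norm{B}$. I expect this factorization of the maximum to be the only real (and very mild) obstacle, the separation of the double sum being purely formal.
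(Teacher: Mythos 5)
Your proposal is correct and follows essentially the same route as the paper: identify the entries of $A\otimes B$ via the pair-indexing $(A\otimes B)_{p(r-1)+v,\,q(s-1)+w}=a_{rs}b_{vw}$, separate the double row sum into a product of the two individual row sums, and factor the maximum over the independent indices $(r,v)$ using nonnegativity. No gaps.
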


The \textit{Kronecker power} is a convenient notation to express all possible products of elements of a vector up to a given order, and it is denoted
\begin{equation}
x^{[i]} :=  \underset{\text{i times}}{\underbrace{x\otimes \cdots \otimes x}},\qquad x \in \R^n.\label{eq:xKroneckerpoweri0}
\end{equation} 
Moreover, $\dim x^{[i]} = n^i$, and each component  of $x^{[i]}$ is of the form $x_1^{\omega_1} x_2^{\omega_2} \cdots x_n^{\omega_n}$ for some multi-index $\omega \in \N^n$ of weight $|\omega| = i$. It follows from Lemma \ref{lemma:crossnormSupMatrixNorm} that $\norm{x\otimes y} = \norm{x}~\norm{y}$ for any $x\in \R^n$ and $y \in \R^m$, and by extension the supremum norm is homogeneous with respect to the Kronecker power,
\begin{equation}
\norm{x^{[i]}} = \norm{x}^i, \qquad i\in \N. \label{eq:ithpowerIdentity}
\end{equation}

\begin{example}
	For $n=2$, the Kronecker powers up to order three are: $x^{[1]}= x$, $x^{[2]} = (x_1^2,x_1 x_2,x_2 x_1,x_2^2)^\transp$ and  $x^{[3]} = (x_1^3,x_1^2 x_2, x_1^2 x_2, x_1 x_2^2,    x_2 x_1^2,x_2^2 x_1 , x_1 x_2^2,x_2^3 )^\transp$.
\end{example}

\subsection{Logarithmic norm} 

The logarithmic norm of $A \in \mathbb{C}^{n\times n}$ with respect to a given matrix norm (induced by some vector norm), is defined as a right G\^{a}teaux derivative \cite{desoer2009feedback}, namely the $h \to 0^+$ limit of $(\Vert \Idn + h A\Vert - \norm{A})/h$. For the supremum norm from \eqref{prop:SupNormInduced} we can deduce that
\begin{equation*}
\mu(A) := \mu_{\infty}(A)=\max_{i} \text{Re } a_{ii} + \sum_{j\neq i} \vert a_{ij} \vert.
\end{equation*}
The logarithmic norm satisfies the following properties \cite{soderlind2006logarithmic}: (i) it is sub-additive, i.e.~$\mu(A+B) \leq \mu(A) + \mu(B)$; (ii) it is upper bounded by the norm of $A$, i.e.~$\mu(A) \leq \norm{A}$; and (iii) $\norm{e^A} \leq e^{\mu(A)}$.

We remark that there is a slight abuse of notation, since the logarithmic norm is not a norm in the usual sense (it can take negative values).

\begin{example}
Let 
$$
A = B = \begin{pmatrix}
0 & 1 \\ -1 & -2
\end{pmatrix}.
$$
Then $\norm{A} = \norm{B} = 3$, and $\norm{AB}= 5$, but $\mu(A) = \mu(B) = 1$,  and $\mu(AB) = 5$. Their Kronecker product is
$$
A\otimes B = 
\begin{pmatrix}
	0 & 0 & 0 & 1 \\
	0 & 0 & -1 & -2 \\
	0 & -1 & 0 & -2 \\
	1 & 2 & 2 & 4
\end{pmatrix},
$$
and we see that $\norm{A\otimes B} = 9$ as well as $\mu(A\otimes B) = 9$. 
\end{example}

The previous example shows that for the logarithmic norm, neither sub-multiplicativity nor the crossnorm property hold in general. However, the following particular case is sufficient for our purposes. 

\begin{lemma} \label{lemma:crossnormLogNorm}
The logarithmic norm satisfies $\mu(A \otimes \Idm) = \mu(A)$ for any $A \in \mathbb{R}^{n\times n}$.
\begin{proof}
Similarly as in Lemma \ref{lemma:crossnormSupMatrixNorm}, we use that 
$$
(A \otimes \Idm)_{m(r-1)+v, m(s-1)+w} = a_{rs} \delta_{vw}
$$ 
for all $1\leq r \leq n$, $1 \leq s \leq n$, $1 \leq v \leq m$, $1 \leq w \leq m$. Moreover, for real $A$,
\begin{align*}
\mu(A\otimes \Idm) &= \max_{1\leq r \leq n} \max_{1\leq v \leq m} \sum_{s=1}^n \sum_{w=1}^m |a_{rs}| |\delta_{vw}| + a_{rr} - |a_{rr}| \\
&= \max_{1\leq r \leq n} \max_{1\leq v \leq m} \sum_{s=1}^n |a_{rs}| + a_{rr} - |a_{rr}| \\
&= \max_{1\leq r \leq n}  \sum_{s=1}^n |a_{rs}| + a_{rr} - |a_{rr}| \\
&= \mu(A).
\end{align*}

\end{proof}
	
\end{lemma}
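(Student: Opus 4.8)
The plan is to reduce the statement to the explicit row-based formula for the logarithmic norm induced by the supremum norm, exactly in the spirit of the crossnorm computation in Lemma~\ref{lemma:crossnormSupMatrixNorm}. For a real matrix the formula reads $\mu(A) = \max_i \big( a_{ii} + \sum_{j\neq i} |a_{ij}| \big)$. The single most useful observation is that this can be rewritten as $\mu(A) = \max_i \big( \sum_j |a_{ij}| + a_{ii} - |a_{ii}| \big)$, which separates the \emph{full} absolute row sum (the object that behaves multiplicatively under the Kronecker product) from a purely diagonal correction term $a_{ii} - |a_{ii}|$. This rewriting is what makes the computation go through, since it lets me treat the diagonal and off-diagonal entries on the same footing inside a single sum.

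First I would record the entries of $A \otimes \Idm$ under the standard double indexing, namely $(A \otimes \Idm)_{m(r-1)+v,\, m(s-1)+w} = a_{rs}\,\delta_{vw}$, so that the rows are labelled by pairs $(r,v)$ with $1 \le r \le n$ and $1 \le v \le m$. Substituting into the rewritten formula, the quantity attached to row $(r,v)$ becomes $\sum_{s,w} |a_{rs}|\,\delta_{vw} + a_{rr} - |a_{rr}|$, where the diagonal entry of that row is $a_{rr}\delta_{vv} = a_{rr}$.

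The key step is to collapse the inner sum using the Kronecker delta: since $\delta_{vw}=0$ unless $w=v$, one has $\sum_{s,w} |a_{rs}|\,\delta_{vw} = \sum_s |a_{rs}|$, independently of $v$. Hence the row quantity equals $\sum_s |a_{rs}| + a_{rr} - |a_{rr}|$, which depends only on $r$. Taking the maximum over all pairs $(r,v)$ therefore reduces to the maximum over $r$ alone, and by the rewritten formula this is precisely $\mu(A)$.

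I do not expect a genuine obstacle here: the lemma is essentially bookkeeping, in contrast to the general failure of the crossnorm property for $\mu$ illustrated by the preceding example. The only point requiring care is the asymmetric treatment of the diagonal entry (it contributes the signed value $a_{rr}$, not $|a_{rr}|$), and the full-row-sum-plus-correction rewriting is exactly the device that handles this without any case analysis on the sign of $a_{rr}$.
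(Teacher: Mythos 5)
Your proposal is correct and follows essentially the same route as the paper's proof: both use the entry formula $(A \otimes \Idm)_{m(r-1)+v,\, m(s-1)+w} = a_{rs}\delta_{vw}$, rewrite $\mu$ as the full absolute row sum plus the diagonal correction $a_{rr} - |a_{rr}|$, collapse the inner sum via the Kronecker delta, and observe that the result is independent of $v$ so the double maximum reduces to a maximum over $r$. No substantive differences.
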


\section{Carleman embedding}
\label{sec:CarlemanEmbedding}

This section is devoted to reviewing the Carleman linearization technique. The infinite-dimensional realization and the finite-dimensional truncation are treated in \ref{ssec:infinidim-review} and \ref{ssec:truncation-review} respectively. We provide in \ref{sec:reductionQuadratic} a reduction algorithm that will be frequently used in the proofs. For additional background on Carleman linearization, we refer to \cite[Chapter 3]{rugh1981nonlinear} or to the comprehensive book \cite{kowalski1991nonlinear}.

\subsection{Infinite-dimensional ODE} \label{ssec:infinidim-review}

Consider the initial-value problem  (IVP)
\begin{equation}
\left\{
\begin{aligned}
x'(t) &= F_1 x + F_2 x^{[2]} + \ldots +  F_k x^{[k]}, \\
 x(0) &= x_0 \in \R^n,\qquad t \in I.
\end{aligned}
\right.
\label{eq:S_ode}
\end{equation}
We assume that the matrix-valued functions $F_j \in \R^{n \times n^j}$ are independent of $t$. Here $k$ is the degree of the polynomial ODE.

\begin{definition}
The \textit{transfer matrix} $A^i_{i+j-1} \in \R^{n^i \times n^{i+j-1}}$ for $i\in \N$, $j = 1,\ldots,k$, is
\begin{align}
A^i_{i+j-1} &:= \sum_{\nu=1}^i   \overset{\text{i factors}}{\overbrace{\Idn \otimes \cdots \otimes \underset{\underset{\nu-\text{th position}}{\uparrow}}{F_j} \otimes  \cdots  \otimes \Idn }}. \label{eq:defiBij}
\end{align}	
We represent this linear map with the diagram
\begin{equation*}
\begin{tikzpicture}[->,>=stealth',shorten >=1pt,auto,
semithick]
\tikzstyle{every state}=[fill=white,draw=black,text=black,inner sep=0pt,minimum size=1cm]

\node[state](A)   {$i$};

\node[state] (B) [right of=A,node distance=2.2cm] {$q$};

\path[->] (A) edge[bend left]          node {$A^{i}_{q}$} (B);

\path[-, loosely dotted, line width=2pt] (A) edge[]              node {} (B);
\end{tikzpicture}
\end{equation*}
where $q = i+j-1$. We will extensively use this notation in the proof sections.
\end{definition} 

It is convenient to define the $i$-th \emph{block of auxiliary variables} as
\begin{equation}
y_i := x^{[i]},\qquad i\in \N. \label{eq:changeofvariables}
\end{equation}
Since $y_i$ is a Kronecker power, $\dim y_i  = n^i$.
 
\begin{proposition} \label{prop:xiDynamics}
If $x : I \to \R^n$ solves \eqref{eq:S_ode} in the interval $I = [0,T] \subset \R^+$, then
 \begin{equation*}
  y'_i= \sum_{j=0}^{k-1} A^i_{i+j} y_{i+j},\qquad i \in \N.  \label{eq:XievolutionAbstract}
 \end{equation*}

 \begin{proof}
  Differentiating Eq. \eqref{eq:xKroneckerpoweri0} and applying Leibniz rule, it follows that
  \begin{subequations}  
  \begin{align*}
   y'_i &= x' \otimes x \otimes \cdots \otimes x + \ldots + x \otimes \cdots \otimes x \otimes x' \\
   &= \sum\limits_{\nu=1}^i \left( x \otimes \cdots \otimes \sum\limits_{j=1}^k F_j x^{[j]} \otimes \cdots \otimes x \right).
  \end{align*}
  \end{subequations}
  From linearity of the Kronecker product we can exchange the sums,
\begin{subequations}  
  \begin{align*}
y'_i &= \sum_{j=1}^k \sum_{\nu=1}^i \left( x \otimes \cdots \otimes  F_j x^{[j]} \otimes \cdots \otimes x \right) \\ 
  &= \sum_{j=1}^k \sum_{\nu=1}^i \left(\Idn \otimes \cdots \otimes F_j \otimes \cdots \otimes \Idn \right) ( x\otimes \cdots \otimes x^{[j]} \otimes \cdots \otimes x ) \\
  &= \sum_{j=1}^k A^{i}_{i+j-1} x^{[i+j-1]} \\
  &= \sum_{j = 0}^{k-1} A^{i}_{i+j} y_{i+j}.
  \end{align*}
  \end{subequations}
 \end{proof}
 
\end{proposition}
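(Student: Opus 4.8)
The plan is to differentiate the defining relation $y_i = x^{[i]} = \underbrace{x \otimes \cdots \otimes x}_{i}$ directly and then substitute the right-hand side of \eqref{eq:S_ode} into each factor. First I would establish a Leibniz-type product rule for the Kronecker product: since $\otimes$ is bilinear and each factor is a differentiable function of $t$, the derivative of an $i$-fold Kronecker product should be the sum of the $i$ terms obtained by differentiating one factor at a time,
\[
y_i' = \sum_{\nu=1}^{i} x \otimes \cdots \otimes \underset{\nu\text{-th}}{x'} \otimes \cdots \otimes x.
\]
I expect this to follow componentwise from the ordinary scalar product rule, because every entry of $x^{[i]}$ is a product $x_{a_1}\cdots x_{a_i}$ of exactly $i$ coordinates.

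Next I would substitute $x' = \sum_{j=1}^{k} F_j x^{[j]}$ into every slot and exchange the two finite sums over $\nu$ and $j$, which is justified by linearity of $\otimes$ in each argument. This should yield
\[
y_i' = \sum_{j=1}^{k} \sum_{\nu=1}^{i} \left( x \otimes \cdots \otimes F_j x^{[j]} \otimes \cdots \otimes x \right),
\]
with the factor $F_j x^{[j]}$ occupying the $\nu$-th slot and plain $x$ in all other slots.

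The key step, and the one I expect to be the main technical point, is to pull the matrix $F_j$ out of the $\nu$-th slot. For this I would invoke the mixed-product property $(A_1 \otimes \cdots \otimes A_i)(B_1 \otimes \cdots \otimes B_i) = (A_1 B_1) \otimes \cdots \otimes (A_i B_i)$, writing $x = \Idn x$ in every non-$\nu$ slot, to obtain
\[
x \otimes \cdots \otimes F_j x^{[j]} \otimes \cdots \otimes x = \left(\Idn \otimes \cdots \otimes F_j \otimes \cdots \otimes \Idn\right)\left(x \otimes \cdots \otimes x^{[j]} \otimes \cdots \otimes x\right).
\]
Here I must track dimensions carefully: replacing a single factor $x \in \R^n$ by $x^{[j]} \in \R^{n^j}$ collapses the vector on the right into $x^{[i+j-1]}$ (total Kronecker power $i-1+j$), while the matrix prefactor is precisely one summand of the transfer matrix $A^{i}_{i+j-1} \in \R^{n^i \times n^{i+j-1}}$ of \eqref{eq:defiBij}. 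Making this slot-by-slot factorization fully rigorous for an arbitrary position $\nu$ is the only subtlety beyond bookkeeping, and the iterated mixed-product property handles it cleanly.

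Finally I would sum over $\nu$ to recognize $\sum_{\nu=1}^{i} \left(\Idn \otimes \cdots \otimes F_j \otimes \cdots \otimes \Idn\right) = A^{i}_{i+j-1}$ by definition \eqref{eq:defiBij}, giving
\[
y_i' = \sum_{j=1}^{k} A^{i}_{i+j-1}\, x^{[i+j-1]} = \sum_{j=1}^{k} A^{i}_{i+j-1}\, y_{i+j-1}.
\]
A shift of the summation index $j \mapsto j+1$, so that $j$ now runs from $0$ to $k-1$, then produces the stated form $y_i' = \sum_{j=0}^{k-1} A^{i}_{i+j}\, y_{i+j}$, completing the argument.
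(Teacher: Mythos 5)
Your proposal is correct and follows essentially the same route as the paper: differentiate the Kronecker power by the Leibniz rule, substitute the polynomial right-hand side, exchange the finite sums, factor out $F_j$ via the mixed-product property, and recognize the transfer matrix $A^i_{i+j-1}$ before re-indexing. Your explicit invocation of the mixed-product property makes the slot-by-slot factorization slightly more rigorous than the paper's version, but the argument is the same.
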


It is convenient to express Proposition \ref{prop:xiDynamics} in matrix form. This can be achieved concatenating all blocks $y_i$ into an infinite-dimensional vector $y := (y_1,y_2,\ldots)^\transp$,  $y \in \R^\N$. Formally, $y(t)$ satisfies the IVP

\begin{equation}
\left\{
\begin{aligned}
y'(t) &= \A y(t), \\
y_i(0) &= (x(0))^{[i]} = x_0^{[i]},\qquad \forall i \in \N,
\end{aligned}
\right.
\label{eq:InfSysMatrix}
\end{equation}
where $\A$ is the infinite-dimensional block upper-triangular matrix
\begin{equation*}
\A := \begin{pmatrix}
A_1^1 & A_2^1 & A_3^1 & \ldots & A_{k}^1 & 0 & 0 & \ldots \\ 
0& A_2^2 & A_3^2 & \ldots & A_{k}^2 & A_{k+1}^2&0 & \ldots \\
0 & 0 & A_3^3 & \ldots  & A_{k}^3 & A_{k+1}^3  & A_{k+2}^3 & \ldots \\   
\vdots &  \vdots &  \vdots &    & \vdots &\vdots & \vdots &\\
\end{pmatrix}. \label{eq:Adefi}
\end{equation*}

This particular structure can be exploited both from a theoretical and from a practical point of view. In particular, we will make use of the following recurrence formula.


\begin{proposition} \label{prop:PropAijFjestimate}
For all $i\geq 1$, $0\leq j \leq k-1$, the estimate $\norm{A^i_{i+j}} \leq i \norm{F_{j+1}}$ holds. 

\begin{proof}

From \eqref{eq:defiBij}, for all $i \geq 2$, $1 \leq j \leq k$, the transfer matrices satisfy 
\begin{equation*}
A^i_{i+j-1} = A^{i-1}_{i+j-2} \otimes \Idn + \Idn^{[i-1]} \otimes  A^1_j.
\end{equation*}
By the triangular inequality,
\begin{equation}
\norm{A^i_{i+j-1}} \leq \norm{A^{i-1}_{i+j-2} \otimes \Idn} + \norm{ \Idn^{[i-1]} \otimes  A^1_j}. \label{eq:InducNorms}
\end{equation}
By the cross-norm property, and since $||\Idn||=1$ for any $n\in \N$, the right-hand side of \eqref{eq:InducNorms} simplifies to
\begin{equation*}
||A^i_{i+j-1}|| \leq ||A^{i-1}_{i+j-2}|| + ||F_j||, \label{eq:InducNorms2}
\end{equation*}
since $A^1_j = F_j $. Applying $i-1$ times the inequality \eqref{eq:InducNorms}, we obtain obtain $||A^i_{i+j-1}|| \leq i ||F_{j}||$, and with the change of variable $j\to j-1$ we obtain the claim.

\end{proof}

\end{proposition}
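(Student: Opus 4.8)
The plan is to prove the estimate $\norm{A^i_{i+j}} \leq i\norm{F_{j+1}}$ by induction on $i$, using the recursive structure of the transfer matrices together with the crossnorm property from Lemma~\ref{lemma:crossnormSupMatrixNorm}. The first step is to establish the recurrence $A^i_{i+j-1} = A^{i-1}_{i+j-2}\otimes \Idn + \Idn^{[i-1]}\otimes A^1_j$, which follows by splitting the sum in definition~\eqref{eq:defiBij}: the terms where $F_j$ sits in positions $\nu = 1,\ldots,i-1$ all carry a trailing $\Idn$ factor and collectively assemble into $A^{i-1}_{i+j-2}\otimes \Idn$, while the single $\nu = i$ term is exactly $\Idn^{[i-1]}\otimes F_j = \Idn^{[i-1]}\otimes A^1_j$. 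I would index using the variable $j$ running from $1$ to $k$ throughout, and only apply the substitution $j\to j-1$ at the very end to match the statement's indexing.

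With the recurrence in hand, the second step applies the triangle inequality to obtain \eqref{eq:InducNorms}, and then simplifies each summand. The crossnorm property gives $\norm{A^{i-1}_{i+j-2}\otimes \Idn} = \norm{A^{i-1}_{i+j-2}}\,\norm{\Idn} = \norm{A^{i-1}_{i+j-2}}$ and likewise $\norm{\Idn^{[i-1]}\otimes A^1_j} = \norm{\Idn^{[i-1]}}\,\norm{A^1_j} = \norm{F_j}$, where I use $\norm{\Idn} = 1$ (and hence $\norm{\Idn^{[i-1]}} = 1$ by homogeneity~\eqref{eq:ithpowerIdentity}) together with $A^1_j = F_j$. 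This yields the one-step bound $\norm{A^i_{i+j-1}} \leq \norm{A^{i-1}_{i+j-2}} + \norm{F_j}$.

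The third and final step is the induction itself. The base case $i = 1$ reads $A^1_j = F_j$, so $\norm{A^1_j} = \norm{F_j} = 1\cdot\norm{F_j}$, matching the claim. For the inductive step, assuming $\norm{A^{i-1}_{i+j-2}} \leq (i-1)\norm{F_j}$, the one-step bound gives $\norm{A^i_{i+j-1}} \leq (i-1)\norm{F_j} + \norm{F_j} = i\norm{F_j}$. Reindexing $j\to j+1$ (equivalently stating the result for $A^i_{i+j}$ with $0\leq j\leq k-1$) produces the claimed inequality.

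I do not anticipate a genuine obstacle here, since the result is a clean telescoping induction; the only point demanding care is the \emph{bookkeeping} of the index shift and the position of the distinguished $F_j$ factor when verifying the recurrence. In particular I would want to confirm that the first $i-1$ terms of the defining sum really do reassemble as a single Kronecker product $A^{i-1}_{i+j-2}\otimes \Idn$ rather than something more complicated, which is the step most prone to an off-by-one or mis-placed-factor error.
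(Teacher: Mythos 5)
Your proposal is correct and follows essentially the same route as the paper: the same recurrence $A^i_{i+j-1} = A^{i-1}_{i+j-2}\otimes \Idn + \Idn^{[i-1]}\otimes A^1_j$, the same use of the triangle inequality and the crossnorm property, and the same telescoping induction (which the paper phrases as ``applying the inequality $i-1$ times''). Your added justification of the recurrence by splitting the defining sum, and your explicit base case, merely spell out steps the paper leaves implicit.
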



\subsection{Truncation of the infinite-dimensional system} \label{ssec:truncation-review}

Now we move on to consider the truncated system of order $N$. Here we choose the \emph{null closure conditions} \cite{bellman1962some}, that consists of eliminating the dependence on variables of order exceeding $N$. The $y_i$-variables removed by the truncation start at $N-k+2$, because from Prop. \ref{prop:xiDynamics} we know that each order $i$ is influenced by the variable further at $k-1$ positions at most. Let

\begin{equation*}
\hat{y}'_i := \sum\limits_{j=0}^{k-1} A^i_{i+j} \hat{y}_{i+j}, \qquad i = 1,\ldots,N-k+1
\end{equation*}
and 
\begin{equation*}
\hat{y}'_i := \sum\limits_{j=0}^{\ell-1}  A^i_{i+j} \hat{y}_{i+j}, \qquad i = N-k+2,\ldots,N
\end{equation*}
where $\ell=\min\{k,N+1-i\}$. As previously, we can express this system in matrix form, if we consider the (finite-dimensional) vector $\hat{y} = (\hat{y}_1, \hat{y}_2, \ldots, \hat{y}_N)^\transp$. Then by construction $\hat{y}$ satisfies the IVP:

\begin{equation}
\left\{
\begin{aligned}
\dfrac{d \hat{y}}{dt} &= \A_N \hat{y}, \\
\hat{y}_i(0) &= x^{[i]}(0) = x_0^{[i]}, \qquad \forall i = 1,\ldots,N
\end{aligned}
\right.
\label{eq:TruncSysMatrix}
\end{equation}
The initial condition is compatible with \eqref{eq:InfSysMatrix}, and $\A_N$ is a finite-dimensional, square, block upper-triangular matrix. Since~$\dim A^i_{i+j-1} = n^i \times n^{i+j-1}$,
the order of $\A_N$ is~$(n^{N+1}-n)/(n-1)$.

\subsection{Reduction to the quadratic case}
\label{sec:reductionQuadratic}

A \emph{quadratic system} is an ODE of the form $x' = F_1 x + F_2 x^{[2]}$.
It is well-known that any higher-order polynomial vector field can be brought into this form by introducing new auxiliary variables. We recall the procedure here for self-containment. 

\begin{proposition}\label{prop:quadratic_reduction}
	Consider the $k$-th order system ($k\geq 2$), 
	\begin{equation}
x' = F_1 x + F_2 x^{[2]} + \ldots + F_{k} x^{[k]}. \label{eq:kthOrderSys}
	\end{equation}
	Introducing the variables $\tilde{x}_i := x^{[i]}$ for $1\leq i \leq k-1$,
	the $k$-th order system \eqref{eq:kthOrderSys} reduces to a quadratic system in $\tilde{x}$, that is,
	\begin{equation}
	\tilde{x}' = \tilde{F}_1 \tilde{x} + \tilde{F}_2 \tilde{x}^{[2]}, \label{eq:2ndOrderSys}
	\end{equation}
	where $\tilde{x} := (\tilde{x}_1, \tilde{x}_2,\ldots, \tilde{x}_{k-1})^\transp$, and the matrices $ \tilde{F}_1$ and $\tilde{F}_2$ are given below.
	Moreover, the supremum norm of the linear and quadratic parts satisfy, respectively,
	\begin{equation*}
			\norm{\tilde{F}_1} \leq \max_{1\leq i \leq k-1} (k-i) \sum_{j=1}^i \norm{F_j} ~~ \text{ and } ~~	\norm{\tilde{F}_2} \leq (k-1) \sum_{j=2}^k \norm{F_j}.
	\end{equation*}
		
	\begin{proof}
		Recall from Prop. \ref{prop:xiDynamics} that $dx^{[i]}/dt = \sum_{j=0}^{k-1} A^i_{i+j} x^{[i+j]}$, $k\geq 2$. Hence, each component of $\tilde{x}$ satisfies the dynamics:
				\begin{equation*}
			\left\{
			\begin{aligned}
			\tilde{x}'_1 &= A^1_1 \tilde{x}_1 + A^1_2~ \tilde{x}_2 + \ldots +   A^1_{k-1} \tilde{x}_{k-1} + A^1_k ~ \tilde{x}_1 \otimes \tilde{x}_{k-1} \\ \\
			\tilde{x}'_2 &= A^2_2 \tilde{x}_2 + A^2_3~ \tilde{x}_3 + \ldots + A^2_{k-1} ~ \tilde{x}_{k-1} + A^2_{k} ~ \tilde{x}_1 \otimes \tilde{x}_{k-1} + A^2_{k+1} ~ \tilde{x}_2 \otimes \tilde{x}_{k-1}\\ 
			&\hspace{0.5em}\vdots \\ 
			\tilde{x}'_{k-1} &= A^{k-1}_{k-1} \tilde{x}_{k-1} + A^{k-1}_{k}~ \tilde{x}_1\otimes \tilde{x}_{k-1} + \ldots + A^{k-1}_{2(k-1)} \tilde{x}_{k-1} \otimes \tilde{x}_{k-1}.
			\end{aligned} \right.
			\end{equation*}
		In consequence, $\tilde{x}' = \tilde{F}_1 \tilde{x} + \tilde{F}_2 \tilde{x}^{[2]}$, 
		with the linear part being
		\begin{equation*}
		\tilde{F}_1 := \begin{pmatrix}
		A^1_1 & A^1_2 & A^1_3 & \ldots & A^1_{k-1} \\ 
		0 & A^2_2 & A^2_3 & \ldots & A^2_{k-1} \\
		0 & 0 & \ddots  & & \vdots \\
		\vdots & \vdots &   & \ddots & \vdots  \\
		0 & 0 & \ldots & 0& A^{k-1}_{k-1} 
		\end{pmatrix},
		\end{equation*}
	and the quadratic part
		\begin{equation*}
		\tilde{F}_2 := 
		\left( \begin{array}{cccccccccccc}
		0 & \cdots & 0 & A^1_k & 0 & \cdots & 0 & 0 & 0 & \cdots~\cdots & 0 & 0 \\
		0 & \cdots & 0 & A^2_k & 0 & \cdots & 0 & A^2_{k+1} & 0 & \cdots~\cdots & 0 &0 \\
		\vdots & & \vdots & \vdots & \vdots &   & \vdots & \vdots & \vdots &  & \vdots &\vdots \\
		0 & \cdots & 0 & A^{k-1}_k & 0 & \cdots & 0 & A^{k-1}_{k+1} & 0 & \cdots~\cdots & 0 & A^{k-1}_{2(k-1)} 
		\end{array} \right).
		\end{equation*}
        On the oher hand, since $||A^i_{i+j}|| \leq i ||F_{j+1}||$ from Prop. \ref{prop:PropAijFjestimate}, we find
		\begin{align}\label{eq:upperbound_F1tilde}
		||\tilde{F}_1|| &= \sup \{ ||A^1_1||+||A^1_2||+\ldots + ||A^1_{k-1}||, \nonumber \\
			&\hspace{4em}||A^2_2||+||A^2_3||+\ldots + ||A^2_{k-1}||, \ldots ||A^{k-1}_{k-1}|| \}, \nonumber \\
		&\leq \max \{ ||F_1||+||F_2||+\ldots + ||F_{k-1}||, \nonumber \\
			&\hspace{4em}2(||F_1||+\ldots + ||F_{k-2}||), \nonumber \\
			&\hspace{4em}3(||F_1||+\ldots + ||F_{k-3}||),\nonumber \\
			&\hspace{4em}\qquad \vdots\nonumber \\
			&\hspace{4em}(k-1)||F_1||\}\nonumber \\
			&\leq \max_{1\leq i \leq k-1} (k-i) \sum_{j=1}^i \norm{F_j}.
		\end{align}
		For the quadratic part,
		\begin{align}\label{eq:upperbound_F2tilde}
		||\tilde{F}_2|| &= \max \{ ||A^1_k||, \nonumber\\
			&\hspace{4em}||A^2_k|| + ||A^2_{k+1}|| ,\nonumber\\
			&\hspace{4em}\qquad \vdots \nonumber\\
			&\hspace{4em}||A^{k-1}_k|| +  ||A^{k-1}_{k+1}|| + \ldots + ||A^{k-1}_{2(k-1)}|| \} \nonumber\\ 
		&\leq \max \left\{ ||F_k||, 
		2(||F_k|| + ||F_{k-1}||),  \ldots \right. \nonumber\\
			&\hspace{4em}\left. (k-1)(||F_k|| +  ||F_{k-1}|| + \ldots + ||F_{2}|| \right\} \nonumber\\ 
		&\leq (k-1) \sum_{j=2}^k \norm{F_j}.
		\end{align}
	\end{proof}
\end{proposition}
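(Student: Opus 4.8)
The plan is to treat the new variables $\tilde{x}_i := x^{[i]}$ (for $1 \le i \le k-1$) as the state, differentiate them using Proposition \ref{prop:xiDynamics}, and then re-express every Kronecker power appearing on the right-hand side either as a single new variable or as a product of two of them. Concretely, Proposition \ref{prop:xiDynamics} gives $\tilde{x}_i' = \sum_{j=0}^{k-1} A^i_{i+j}\, x^{[i+j]}$, so the orders $m := i+j$ occurring in the $i$-th equation range over $i \le m \le i+k-1$. The single algebraic fact that drives the reduction is the identity $\tilde{x}_a \otimes \tilde{x}_b = x^{[a]} \otimes x^{[b]} = x^{[a+b]}$. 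Using it, I would classify each term $A^i_m x^{[m]}$: if $m \le k-1$ it is linear, namely $A^i_m \tilde{x}_m$; if $m \ge k$ it is quadratic, and I would write $x^{[m]} = \tilde{x}_{m-k+1} \otimes \tilde{x}_{k-1}$, which is a legal product of two state blocks precisely because $k \le m \le 2(k-1)$ forces $1 \le m-k+1 \le k-1$.

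The key structural observation, and the reason the reduction closes at second order rather than producing higher-degree terms, is that the largest order appearing anywhere is attained in the last equation, $m = (k-1)+(k-1) = 2(k-1)$, which is exactly what the factorization $\tilde{x}_{k-1}\otimes\tilde{x}_{k-1}$ can absorb; this is where the hypothesis $k \ge 2$ is used. Collecting the linear coefficients $A^i_m$ with $i \le m \le k-1$ into a block upper-triangular matrix yields $\tilde{F}_1$, and placing each quadratic coefficient $A^i_m$ (with $m \ge k$) into the slot of $\tilde{x}^{[2]}$ corresponding to $\tilde{x}_{m-k+1}\otimes\tilde{x}_{k-1}$ yields $\tilde{F}_2$; the initial condition $\tilde{x}_i(0)=x_0^{[i]}$ is automatic. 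This reproduces the two displayed matrices exactly.

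For the norm estimates I would use that $\norm{\cdot}$ is the maximum absolute row sum \eqref{prop:SupNormInduced}, applied block-row-wise. For $\tilde{F}_1$ the $i$-th block row is $(A^i_i,\dots,A^i_{k-1})$, so its contribution is $\sum_{m=i}^{k-1}\norm{A^i_m}$; invoking Proposition \ref{prop:PropAijFjestimate} in the form $\norm{A^i_m}\le i\,\norm{F_{m-i+1}}$ and setting $\ell = m-i+1$ gives the bound $i\sum_{\ell=1}^{k-i}\norm{F_\ell}$ for that row. Taking the maximum over $i$ and performing the substitution $i \mapsto k-i$ turns $i\sum_{\ell=1}^{k-i}\norm{F_\ell}$ into $(k-i)\sum_{j=1}^{i}\norm{F_j}$, which is the claimed bound. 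For $\tilde{F}_2$ the $i$-th block row collects $A^i_k,\dots,A^i_{i+k-1}$, so the same argument gives $\sum_{m=k}^{i+k-1}\norm{A^i_m}\le i\sum_{\ell=k-i+1}^{k}\norm{F_\ell}$; since $i\le k-1$ and the index range $[k-i+1,k]$ is contained in $[2,k]$, each such row is at most $(k-1)\sum_{j=2}^{k}\norm{F_j}$, and so is the maximum.

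I expect the reduction and matrix assembly to be routine; the only genuine care points are bookkeeping ones. The first is making sure the range of orders never exceeds $2(k-1)$, so that the system is genuinely quadratic, which must be checked at the extreme equation $i=k-1$. The second, and the place most likely to hide a slip, is the norm computation: one has to recognize that the induced supremum norm on the block matrices reduces to a block-row sum, align the index shift coming from Proposition \ref{prop:PropAijFjestimate}, and carry out the reindexing $i \mapsto k-i$ that brings the $\tilde{F}_1$ estimate into the stated symmetric form.
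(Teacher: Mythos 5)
Your proposal is correct and follows essentially the same route as the paper: differentiate the blocks via Proposition \ref{prop:xiDynamics}, absorb every order $m\geq k$ as $\tilde{x}_{m-k+1}\otimes\tilde{x}_{k-1}$ to close the system at degree two, and bound the block-row sums of $\tilde{F}_1$ and $\tilde{F}_2$ using $\norm{A^i_{i+j}}\leq i\norm{F_{j+1}}$, with the same reindexing $i\mapsto k-i$ for the $\tilde{F}_1$ estimate. The bookkeeping (ranges of $m$, the containment $[k-i+1,k]\subseteq[2,k]$) all checks out.
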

Proposition \ref{prop:quadratic_reduction} gives upper bounds on the coefficients of the reduced quadratic system. In practice, the norms of $\Vert \tilde{F}_1 \Vert$ and $\Vert \tilde{F}_2 \Vert$ can be computed exactly using the expressions \eqref{eq:upperbound_F1tilde} and \eqref{eq:upperbound_F2tilde} respectively.

\section{Main results}
\label{sec:MainResults}

Consider the polynomial quadratic system
\begin{equation}
x'= F_1 x+ F_2 x^{[2]},\qquad x \in \R^n. \label{eq:QuadSystemS4}
\end{equation}
Using Proposition \ref{prop:xiDynamics}, we obtain the infinite-dimensional Carleman embedding
\begin{equation}
y'_i =  A^i_i y_i +  A^i_{i+1} y_{i+1}, \qquad i \in \N. \label{eq:QuadSystemInfDim}
\end{equation}
Define the truncated system of order $N$ as
\begin{equation}
\hat{y}'_i =  A^i_i \hat{y}_i +  A^i_{i+1} \hat{y}_{i+1} \delta_{i < N},\qquad 1 \leq i \leq N, \label{eq:QuadSystemTrunc}
\end{equation}
where $ \delta_{i < N}$ is equal to $1$ if $i < N$ and $0$ otherwise.

\begin{definition}
Let the \emph{error of the $i$-th block} be 
\begin{equation}
\eta_i(t) = y_i(t) - \hat{y}_i(t), \qquad 1 \leq i \leq N. \label{eq:errorithblock}
\end{equation}
We also introduce the special notation 
\begin{equation*}
x(t) := y_1(t),\qquad \hat{x}(t) := \hat{y}_1(t),
\end{equation*}
for the solution of the exact and truncated systems respectively, projected onto $\R^n$, and the associated \emph{error on the first block}, or simply the \emph{error}, as
\begin{equation*}
\varepsilon(t) := \eta_1(t) = x(t) - \hat{x}(t).
\end{equation*}

\end{definition} 
Two approaches leading to explicit bounds on $\norm{\varepsilon(t)}$ are considered. In Section~\ref{ssec:errorBoundI} we integrate the differential equation satisfied by the error, to obtain a bound by an explicit integral computation. This formula requires an a priori bound on the norm of the exact solution.
Such estimates can be derived in the general case (see Section~\ref{ssec:Relationship}) but are generally very pessimistic. However,
the system may satisfy some bounds on its solution by construction, especially systems modelling physical
processes.
In Section~\ref{ssec:errorBoundII} we present the result of another method, that exploits the analyticity properties of solutions, and we obtain estimates for the error series using generating functions. These two bounds are compared in Section~\ref{ssec:Relationship}. We close this section with an illustrative application in Section~\ref{ssec:Example}.

\subsection{Backwards integration method} \label{ssec:errorBoundI}

We first consider an error bound based on an a priori estimate of the norm of the exact solution, $x(t)$. 

\begin{theorem} \label{theorem:ErrorBound1}
Let $x : I \to \R^n$ be a solution of the quadratic system \eqref{eq:QuadSystemS4}. Let $\alpha > 0$ be such that
\begin{equation}
\alpha \geq \norm{x}_t := \sup_{\tau \in [0,t]} \norm{x(\tau)}.  \label{eq:xtdefi}
\end{equation}
Then, the error $\varepsilon(t) = x(t) - \hat{x}(t)$ on the solution obtained by Carleman linearization truncated at order $N$ satisfies the estimate
\begin{equation}
\norm{\varepsilon(t)}\leq  \E_1(t) := \dfrac{\alpha^{N+1} \norm{F_2}^N}{\mu(F_1)^N} (e^{\mu(F_1)t} - 1)^N. \label{eq:Thm1MainEstimate}
\end{equation}
If $\mu(F_1) < 0$ then the estimate holds for all $t \geq 0$ and the error converges to $0$. Otherwise, on the interval 
\begin{equation}\label{eq:ThmErrorBnd1:interval}
	0 < t < \frac{1}{\mu(F_1)} \ln \left(1+ \frac{\mu(F_1)}{\norm{x}_t \norm{F_2}} \right)	
\end{equation}
the solution of the truncated system converges, that is, $\lim\limits_{N\to \infty} \norm{\varepsilon(t)} = 0$.
Note that when $\mu(F_1)=0$, the right-hand side of \eqref{eq:ThmErrorBnd1:interval}
is defined by continuity and its value is $\frac{1}{\norm{x}_t\norm{F_2}}$.
\end{theorem}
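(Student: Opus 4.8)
The plan is to write down the differential equation satisfied by the block errors $\eta_i$ and integrate it ``from the top block downwards'', which turns the matrix problem into a scalar integral recursion whose solution is exactly $\E_1(t)$.

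First I would subtract the truncated dynamics \eqref{eq:QuadSystemTrunc} from the exact embedding \eqref{eq:QuadSystemInfDim}. Since the cut-off $\delta_{i<N}$ alters only the last equation, the errors $\eta_i$ from \eqref{eq:errorithblock} obey
\begin{equation*}
\eta_i' = A^i_i\eta_i + A^i_{i+1}\eta_{i+1}\ \ (1\le i\le N-1),\qquad \eta_N' = A^N_N\eta_N + A^N_{N+1}\,y_{N+1},
\end{equation*}
with $\eta_i(0)=0$ for all $i$, since both systems share the initial condition. The system is block upper-bidiagonal, and the single genuine forcing term is $A^N_{N+1}y_{N+1}$, which I control through $\norm{y_{N+1}(s)}=\norm{x(s)}^{N+1}\le\alpha^{N+1}$ using \eqref{eq:ithpowerIdentity} and \eqref{eq:xtdefi}.

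Next I would estimate each block by variation of constants, writing $\eta_i(t)=\int_0^t e^{A^i_i(t-s)}g_i(s)\,ds$ with $g_i=A^i_{i+1}\eta_{i+1}$ for $i<N$ and $g_N=A^N_{N+1}y_{N+1}$. Property (iii) of the logarithmic norm yields $\norm{e^{A^i_i(t-s)}}\le e^{\mu(A^i_i)(t-s)}$ for $t\ge s$. The two ingredients I then need are $\norm{A^i_{i+1}}\le i\,\norm{F_2}$ and $\mu(A^i_i)\le i\,\mu(F_1)$. The first is Proposition \ref{prop:PropAijFjestimate} with $j=1$; the second follows by writing $A^i_i=\sum_{\nu=1}^i \Idn^{[\nu-1]}\otimes F_1\otimes \Idn^{[i-\nu]}$, noting that each summand has logarithmic norm $\mu(F_1)$ (by Lemma \ref{lemma:crossnormLogNorm} together with the block-diagonal identity $\mu(\Idm\otimes A)=\mu(A)$), and invoking sub-additivity (property (i)). Setting $b_i(t):=\norm{\eta_i(t)}$, these estimates give the scalar recursion
\begin{equation*}
b_i(t)\le i\,\norm{F_2}\int_0^t e^{i\mu(F_1)(t-s)}\,b_{i+1}(s)\,ds\ \ (1\le i\le N),\qquad b_{N+1}:=\alpha^{N+1}.
\end{equation*}

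The crux is to solve this recursion. Factoring out the constants by $b_i(t)=\alpha^{N+1}\norm{F_2}^{N+1-i}h_i(t)$ reduces it to $h_i(t)=i\int_0^t e^{i\mu(F_1)(t-s)}h_{i+1}(s)\,ds$ with $h_{N+1}\equiv1$, and I would prove by downward induction on $i$ that $h_1(t)=\big(\tfrac{e^{\mu(F_1)t}-1}{\mu(F_1)}\big)^N$, which is precisely $\E_1(t)/(\alpha^{N+1}\norm{F_2}^N)$. I expect this induction to be the main obstacle. The prefactors $i$ and the block-dependent rates $i\mu(F_1)$ make the intermediate functions $h_i$ genuinely complicated (they turn out to be degree-$(i-1)$ polynomials in $e^{\mu(F_1)t}$ times $(e^{\mu(F_1)t}-1)^{N+1-i}$), yet the final value collapses: the factor $i$ cancels against the $1/i$ produced when integrating $e^{i\mu(F_1)(t-s)}$, so that no factorial survives and one is left with the clean $N$-th power. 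Checking the base case $h_N=(e^{N\mu(F_1)t}-1)/\mu(F_1)$ and then carrying out $i\int_0^t e^{i\mu(F_1)(t-s)}h_{i+1}(s)\,ds$ to identify the correct inductive form is where the real work lies; combining with the previous step gives \eqref{eq:Thm1MainEstimate}.

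Finally, for the convergence claims I would rewrite the bound as $\E_1(t)=\alpha\,\rho(t)^N$ with $\rho(t):=\alpha\norm{F_2}\,\tfrac{e^{\mu(F_1)t}-1}{\mu(F_1)}$, so that $\lim_{N\to\infty}\norm{\varepsilon(t)}=0$ exactly when $\rho(t)<1$. Taking $\alpha=\norm{x}_t$ and solving $\rho(t)<1$ gives $e^{\mu(F_1)t}<1+\mu(F_1)/(\norm{x}_t\norm{F_2})$, i.e.\ the interval \eqref{eq:ThmErrorBnd1:interval}; the case $\mu(F_1)=0$ is recovered from the limit $\tfrac{e^{\mu(F_1)t}-1}{\mu(F_1)}\to t$, giving $t<1/(\norm{x}_t\norm{F_2})$. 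When $\mu(F_1)<0$ the map $t\mapsto\tfrac{e^{\mu(F_1)t}-1}{\mu(F_1)}$ stays bounded by $1/|\mu(F_1)|$ for all $t\ge0$, so $\E_1(t)$ never blows up, and as soon as $\norm{x}_t\norm{F_2}\le|\mu(F_1)|$ the threshold in \eqref{eq:ThmErrorBnd1:interval} is pushed to $+\infty$, giving convergence on all of $\R^+$.
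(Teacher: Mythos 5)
Your proposal follows essentially the same route as the paper: subtract the truncated from the exact embedding, integrate the resulting block-bidiagonal error system from the top block downwards by variation of constants, bound $\norm{e^{A^i_i(t-s)}}$ via the logarithmic norm and $\norm{A^i_{i+1}}\leq i\norm{F_2}$ via Proposition \ref{prop:PropAijFjestimate}, reduce to a scalar integral recursion, and read off the convergence interval from the $N$-th power structure of the bound. (Your explicit justification of $\mu(A^i_i)\leq i\,\mu(F_1)$ through sub-additivity, Lemma \ref{lemma:crossnormLogNorm} and the block-diagonal identity $\mu(\Idm\otimes A)=\mu(A)$ is in fact more careful than the paper, which uses this bound silently inside Lemma \ref{lemma:Hestimate}.) The one substantive step you leave open --- showing that the recursion $h_i(t)=i\int_0^t e^{i\mu(F_1)(t-s)}h_{i+1}(s)\,\dd s$ with $h_{N+1}\equiv 1$ collapses to $h_1(t)=\bigl((e^{\mu(F_1)t}-1)/\mu(F_1)\bigr)^N$ --- is precisely the content of the paper's Lemma \ref{lemma:RecurrentIntegralTN}, which is proved in Appendix \ref{app:ProofPropIntegrals} not by your downward induction but by rewriting the nested integral as a single integral over a simplex and symmetrizing the domain over permutations of $s_1,\ldots,s_{N-1}$. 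Your ansatz for the intermediate functions (a degree-$(i-1)$ polynomial in $e^{\mu(F_1)t}$ times $(e^{\mu(F_1)t}-1)^{N+1-i}/\mu(F_1)^{N+1-i}$) does check out at $i=N$ and $i=N-1$, and the induction can be made to close, but as you yourself note this is where the real work lies; until it is carried out the argument is a correct and complete skeleton rather than a finished proof.
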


The proof of this result is presented in Section \ref{ssec:errorBoundIProof}.

\subsection{Power  series method} \label{ssec:errorBoundII}

Now we consider a refined version of Theorem \ref{theorem:ErrorBound1}, where only the initial condition is required (instead of a priori estimates on the norm of the solution, see \eqref{eq:xtdefi}). 

\begin{theorem}  \label{theorem:ErrorBound2}
Let $x : I\to \R^n$ be a solution of the quadratic system \eqref{eq:QuadSystemS4}, and 
\begin{equation}
\beta_0 := \frac{||x_0||~||F_2||}{||F_1||}. \label{eq:beta0defi}
\end{equation}
Then, the error $\varepsilon(t) = x(t) - \hat{x}(t)$ on the solution obtained by Carleman linearization truncated at order $N$ satisfies the estimate
\begin{equation}
||\varepsilon(t)|| \leq \E_2(t) := \dfrac{||x_0|| e^{||F_1||t}}{(1+\beta_0) - \beta_0 e^{||F_1||t}}\left[ \beta_0 (e^{||F_1|| t} - 1 )\right]^N. \label{eq:Thm2MainEstimate}
\end{equation}
Moreover, for all $0 < t < T^*$, where
\begin{equation}
T^* := \frac{1}{||F_1||} \ln \left(1 +  \frac{1}{\beta_0} \right), \label{eq:Thm2Cond}
\end{equation}
the solution of the truncated system converges, that is,
\begin{equation*}
\lim_{N\to \infty} ||\varepsilon(t)|| = 0, \qquad \text{ for all }  t < T^*.
\end{equation*}

\end{theorem}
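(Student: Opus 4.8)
The plan is to reproduce the error as a power series in $t$, set up the recurrence satisfied by its coefficients, and solve that recurrence with a generating function, exactly as announced in the introduction. \textbf{First} I would write down the differential system governing the block errors $\eta_i = y_i - \hat{y}_i$. Subtracting \eqref{eq:QuadSystemTrunc} from \eqref{eq:QuadSystemInfDim} and using that the initial conditions of the exact and truncated systems agree, one gets $\eta_i(0)=0$ for every $1\le i\le N$, together with
\begin{equation*}
\eta_i' = A^i_i \eta_i + A^i_{i+1}\eta_{i+1} \quad (i<N), \qquad \eta_N' = A^N_N \eta_N + A^N_{N+1}\, y_{N+1}.
\end{equation*}
The crucial observation is that the truncation injects an error only at the top block $N$, through the single inhomogeneous term $A^N_{N+1}y_{N+1}$, which then cascades downward until it reaches $\eta_1=\varepsilon$.

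\textbf{Next} I would pass to scalar majorants. Writing each block as a power series $\eta_i(t)=\sum_m \eta_{i,m}t^m$ turns the system into the coefficient recurrence $(m+1)\eta_{i,m+1}=A^i_i\eta_{i,m}+A^i_{i+1}\eta_{i+1,m}$, with the source $A^N_{N+1}y_{N+1,m}$ at $i=N$. Taking norms and invoking Proposition~\ref{prop:PropAijFjestimate}, namely $\norm{A^i_i}\le i\norm{F_1}$ and $\norm{A^i_{i+1}}\le i\norm{F_2}$, bounds the coefficient norms $E_{i,m}:=\norm{\eta_{i,m}}$ from above by those of a purely scalar recurrence. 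To control the source I would use the homogeneity identity \eqref{eq:ithpowerIdentity}, $\norm{y_{N+1}(t)}=\norm{x(t)}^{N+1}$, together with the classical majorant principle: the Taylor coefficients of $x$ are dominated coefficientwise by those of the scalar Bernoulli equation $u'=\norm{F_1}u+\norm{F_2}u^2$, $u(0)=\norm{x_0}$, whose explicit solution
\begin{equation*}
u(t)=\frac{\norm{x_0}e^{\norm{F_1}t}}{(1+\beta_0)-\beta_0 e^{\norm{F_1}t}}
\end{equation*}
is precisely the prefactor appearing in $\E_2(t)$. Hence $\norm{y_{N+1}}$ is majorized by $u^{N+1}$.

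\textbf{Then} I would solve the resulting coupled recurrence by generating functions. Setting $G_i(t)=\sum_m \bar{E}_{i,m}t^m$ for the majorant coefficients converts it into the triangular linear system $G_i'=i\norm{F_1}G_i+i\norm{F_2}G_{i+1}$ for $i<N$ and $G_N'=N\norm{F_1}G_N+N\norm{F_2}u^{N+1}$, with $G_i(0)=0$. The $i$-dependent rates are handled by the substitution $w_i:=G_i/u^i$, which, after rescaling time through $d\tau=\norm{F_2}u\,dt$ (so that $\tau=\ln\frac{1}{(1+\beta_0)-\beta_0 e^{\norm{F_1}t}}$ and $1-e^{-\tau}=\beta_0(e^{\norm{F_1}t}-1)$), reduces the system to the constant-coefficient form $\dot{w}_i=i(w_{i+1}-w_i)$, $\dot{w}_N=N(1-w_N)$, $w_i(0)=0$. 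Solving this from the top block downward (or reading off the generating function of its solution) yields the closed form $w_1(\tau)=(1-e^{-\tau})^N$, whence $\norm{\varepsilon(t)}\le G_1(t)=u(t)(1-e^{-\tau})^N=\E_2(t)$.

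\textbf{Finally}, the convergence claim follows because the factor $[\beta_0(e^{\norm{F_1}t}-1)]^N=(1-e^{-\tau})^N$ tends to $0$ as $N\to\infty$ exactly when $\beta_0(e^{\norm{F_1}t}-1)<1$, which rearranges to $t<T^*$, the same interval on which the denominator of $u(t)$ stays positive. The step I expect to be the main obstacle is the closed-form solution of the coupled recurrence: one must track the $i$-dependent rates carefully so that the factorial-type factors produced by the cascade through $N$ blocks collapse into the clean power $(1-e^{-\tau})^N$, and one must justify both the coefficientwise majorant comparison for $x$ and the interchange of summations implicit in the generating-function manipulations.
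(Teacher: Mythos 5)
Your proposal is sound and arrives at exactly $\E_2(t)$, but by a genuinely different route than the paper. The paper never returns to the error ODE \eqref{eq:errorseq}: it expands $y_i$ and $\hat y_i$ as Taylor series whose coefficients are ``path sums'' $C^{(\nu)}_{i,i+j}$ of products of transfer matrices, bounds the norms of these path sums by solving a three-index recurrence with a generating function in a formal variable $z$ (Lemma \ref{lemma:recurrenceK2}), and then resums the resulting triple binomial series by Egorychev's contour-integral method. You instead stay with the block-error system $\eta_i'=A^i_i\eta_i+A^i_{i+1}\eta_{i+1}$, $\eta_N'=A^N_N\eta_N+A^N_{N+1}y_{N+1}$, majorize coefficientwise, and feed in the Bernoulli majorant $u(t)$ for the source --- which is precisely the Proposition of Section \ref{ssec:Relationship}, used there only a posteriori to compare $\E_1$ and $\E_2$. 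The pleasant surprise is that keeping the time dependence of $u(s)^{N+1}$ inside the cascade (rather than replacing it by a supremum $\alpha^{N+1}$, as in Theorem \ref{theorem:ErrorBound1}) recovers $\E_2(t)$ exactly: your substitution $w_i=G_i/u^i$ with $\dd\tau=\norm{F_2}u\,\dd t$ is correct ($\tau=-\ln\bigl((1+\beta_0)-\beta_0e^{\norm{F_1}t}\bigr)$, so $1-e^{-\tau}=\beta_0(e^{\norm{F_1}t}-1)$), and both approaches depend only on $x_0$, which is the whole point of Theorem \ref{theorem:ErrorBound2}. Your argument is more elementary (no path-sum combinatorics, no complex analysis) and exposes why the two theorems share the same structure; the paper's approach is more systematic and its Lemma \ref{lemma:recurrenceK2} gives sharper information (bounds for every block $\eta_i$, not just $\eta_1$).

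The one step you assert without proof is the crux: that the constant-coefficient cascade $\dot w_i=i(w_{i+1}-w_i)$, $\dot w_N=N(1-w_N)$, $w_i(0)=0$ has $w_1(\tau)=(1-e^{-\tau})^N$. This is true but must be established. A clean way: set $v_i=1-w_i$, so $\dot v_i=i(v_{i+1}-v_i)$ with $v_{N+1}\equiv 0$ and $v_i(0)=1$; then verify by downward induction on $i$ that
\begin{equation*}
v_i(\tau)=\sum_{k=i}^{N}\binom{k-1}{i-1}e^{-i\tau}\bigl(1-e^{-\tau}\bigr)^{k-i},
\end{equation*}
(the law of a Yule process started at $i$, truncated at $N$), whence $v_1=\sum_{k=1}^N e^{-\tau}(1-e^{-\tau})^{k-1}=1-(1-e^{-\tau})^N$. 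You should also record explicitly the coefficientwise majorant lemma you invoke (that $(m+1)\norm{x_{m+1}}\le\norm{F_1}\norm{x_m}+\norm{F_2}\sum_{p+q=m}\norm{x_p}\,\norm{x_q}$ propagates $\norm{x_m}\le u_m$ by induction, hence $\norm{y_{N+1,m}}\le(u^{N+1})_m$ by the crossnorm property), and note that all series involved converge absolutely for $0\le t<T^*$ since $u$ has radius of convergence $T^*$. With those two points filled in, your proof is complete.
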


The proof of this result is presented in Section \ref{ssec:errorBoundIIProof}.

\subsection{Relationship between the error bounds}\label{ssec:Relationship}

It is not immediately clear which of \eqref{eq:Thm1MainEstimate} or \eqref{eq:Thm2MainEstimate} is best. At a first glance, \eqref{eq:Thm1MainEstimate} looks better but requires to know $\alpha$,
which can be really large. On the other hand, \eqref{eq:Thm2MainEstimate} only depends
on the initial condition $x_0$ but is substantially more complicated. In this section,
we derive a generic bound on $\alpha$ based on $t$ and $x_0$ and plug it in
\eqref{eq:Thm1MainEstimate}. We can then compare the two bounds in the specific
situation where we have no a priori bound on $\alpha$. We need this intermediate result.

\begin{proposition}
	If $x : I\to \R^n$ is a solution of the quadratic system $x' = F_1 x + F_1 x^{[2]}$, with initial condition $x(0) = x_0$, then
	\begin{equation}\label{eq:bound_growth_quadratic}
	||x(t)|| \leq \dfrac{||x_0|| e^{||F_1|| t} ||F_1||}{||F_1||+||F_2||(1-e^{||F_1||t})||x_0||}.
	\end{equation}
	\begin{proof}
		Observe that if $p(x) = F_1 x + F_2 x^{[2]}$, then
		\begin{equation}
		||p(x)|| \leq ||F_1|| ~ ||x|| + ||F_1|| ~ ||x||^2, \label{eq:unormodepe}
		\end{equation}
		where use was made of \eqref{eq:ithpowerIdentity}. Define the following differential equation, for $a,b \in \R^+$:
		\begin{equation}
		u' = a u + b u^2,\qquad u(0) = u_0. \label{eq:unormode}
		\end{equation}
		We can easily find an explicit formula for $u(t)$ by the method of separation of variables, obtaining
		\begin{equation}
		u(t) = \dfrac{u_0 a e^{at}}{a+b(1-e^{at})u_0}. \label{eq:unormodeExplicit}
		\end{equation}
		If $u_0$ is chosen such that $u_0 \geq ||x_0||$, and if we choose $a=||F_1||$ and $b = ||F_2||$, then by standard differential inequality arguments we can deduce from \eqref{eq:unormodepe} that the estimate $u(t) \geq ||x(t)||$ holds for all $t \in I$ . Finally using \eqref{eq:unormodeExplicit}, with $u_0 = ||x_0||$, we obtain the claim.
	\end{proof}
\end{proposition}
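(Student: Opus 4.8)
The plan is to reduce this vector-valued growth estimate to a one-dimensional comparison argument, so that the only place the Kronecker structure enters is in bounding the size of the right-hand side. First I would write $p(x) = F_1 x + F_2 x^{[2]}$ and estimate its norm using the triangle inequality together with the homogeneity of the supremum norm under Kronecker powers \eqref{eq:ithpowerIdentity}: this gives $\norm{p(x)} \le \norm{F_1}\,\norm{x} + \norm{F_2}\,\norm{x}^2$. From this point on the problem is purely scalar.

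Next I would introduce the scalar Bernoulli equation $u' = a u + b u^2$ with $a := \norm{F_1}$, $b := \norm{F_2}$ and initial value $u(0) = u_0 \ge \norm{x_0}$, and solve it in closed form. Being a Bernoulli equation it linearizes under the substitution $w = 1/u$, which turns it into the affine equation $w' = -a w - b$; integrating and inverting yields $u(t) = u_0 a e^{at}/\bigl(a + b(1 - e^{at})u_0\bigr)$. With the choice $u_0 = \norm{x_0}$ this is exactly the right-hand side of the claimed bound, so it remains only to establish the comparison $\norm{x(t)} \le u(t)$.

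The crux of the argument is to pass from the ODE for $x$ to a scalar differential inequality for $v(t) := \norm{x(t)}$. Here the main obstacle is that $t \mapsto \norm{x(t)}$ need not be differentiable, since the supremum norm is not smooth, so one cannot simply differentiate the norm. I would circumvent this using the upper right Dini derivative $D^+$, which satisfies the standard estimate $D^+ v(t) \le \norm{x'(t)} = \norm{p(x(t))} \le a\,v(t) + b\,v(t)^2 = f(v(t))$, where $f(s) := a s + b s^2$.

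Finally I would invoke the comparison theorem for differential inequalities: because $f$ is a polynomial and hence locally Lipschitz, the IVP $u' = f(u)$, $u(0) = u_0$ has a unique solution, and the inequality $D^+ v \le f(v)$ together with $v(0) = \norm{x_0} \le u_0 = u(0)$ forces $v(t) \le u(t)$ on the common interval of existence. Substituting the explicit expression for $u(t)$ then gives the stated bound. I expect the Dini-derivative comparison to be the only delicate point; it is classical, but worth citing precisely, since the naive ``differentiate $\norm{x(t)}$'' shortcut is not rigorous at points where the maximizing coordinate of $x$ changes.
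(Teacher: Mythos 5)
Your proposal is correct and follows essentially the same route as the paper: bound $\norm{p(x)}$ via the crossnorm/homogeneity property, compare $\norm{x(t)}$ with the explicit solution of the scalar Riccati equation $u' = au + bu^2$, and conclude. The only difference is that you spell out the "standard differential inequality arguments" the paper invokes (using the Dini derivative $D^+\norm{x(t)} \le \norm{x'(t)}$ and the comparison theorem), which is a welcome clarification rather than a departure.
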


Plugging \eqref{eq:bound_growth_quadratic} in \eqref{eq:Thm1MainEstimate} we get that
\begin{align*}
\E_1(t) &\leq  \dfrac{\alpha^{N+1} \norm{F_2}^N}{\norm{F_1}^N} (e^{\norm{F_1}t} - 1)^N\\
&\leq \left(\dfrac{||x_0|| e^{\norm{F_1} t} \norm{F_1}}{\norm{F_1} + \norm{F_2}(1-e^{\norm{F_1}t})\norm{x_0}}\right)^{N+1}
\frac{\norm{F_2}^N}{\norm{F_1}^N}(e^{\norm{F_1}t}-1)^N\\
&\leq \left(\dfrac{e^{\norm{F_1} t}}{1+\frac{\norm{F_2}~\norm{x_0}}{\norm{F_1}}(1-e^{\norm{F_1} t})}\right)^{N+1}
\frac{\norm{x_0}^{N+1}\norm{F_2}^N}{\norm{F_1}^N}(e^{\norm{F_1} t}-1)^N\\
&\leq \left(\dfrac{e^{\norm{F_1} t}}{1+\beta_0(1-e^{\norm{F_1} t})}\right)^{N+1}
\norm{x_0}\beta_0^N(e^{\norm{F_1} t}-1)^N\\
&\leq \left(\dfrac{e^{\norm{F_1} t}}{(1+\beta_0)-\beta_0 e^{\norm{F_1} t}}\right)^{N+1}
\norm{x_0}\left[ \beta_0 (e^{\norm{F_1} t} - 1 )\right]^N\\
&\leq \left(\dfrac{e^{\norm{F_1} t}}{(1+\beta_0)-\beta_0 e^{\norm{F_1} t}}\right)^N \E_2(t).
\end{align*}
It is thus clear that if we simply use the worst case bound on $\alpha$, $\E_1(t)$
can be significantly worse than $\E_2(t)$, possibly by an exponential factor in $N$.
This suggests that $\E_1(t)$ is only useful if we have an a priori bound on $\alpha$
that is much better than the worst case. Finally note that $\E_1(t)$ can be valid for
much longer time intervals than $\E_2(t)$ because the existence of $\alpha$ implies
the existence of the solution, something that $\E_2(t)$ cannot capture.

\subsection{Example} \label{ssec:Example}

We have implemented Carleman linearization of polynomial ODEs in our software package \texttt{carlin}, which is publicly available \cite{FO17carlin}. It is written in Python, and for the symbolic polynomial manipulations we rely on the open-source mathematics sofware system SageMath \cite{sagemath}. For the numerical computations we use sparse matrix linear algebra provided by SciPy \cite{scipy}.

\begin{figure}[!ht] \label{fig:vanderpol}
	\centering
	\subfigure[Phase portrait]{
		\includegraphics[height=220pt]{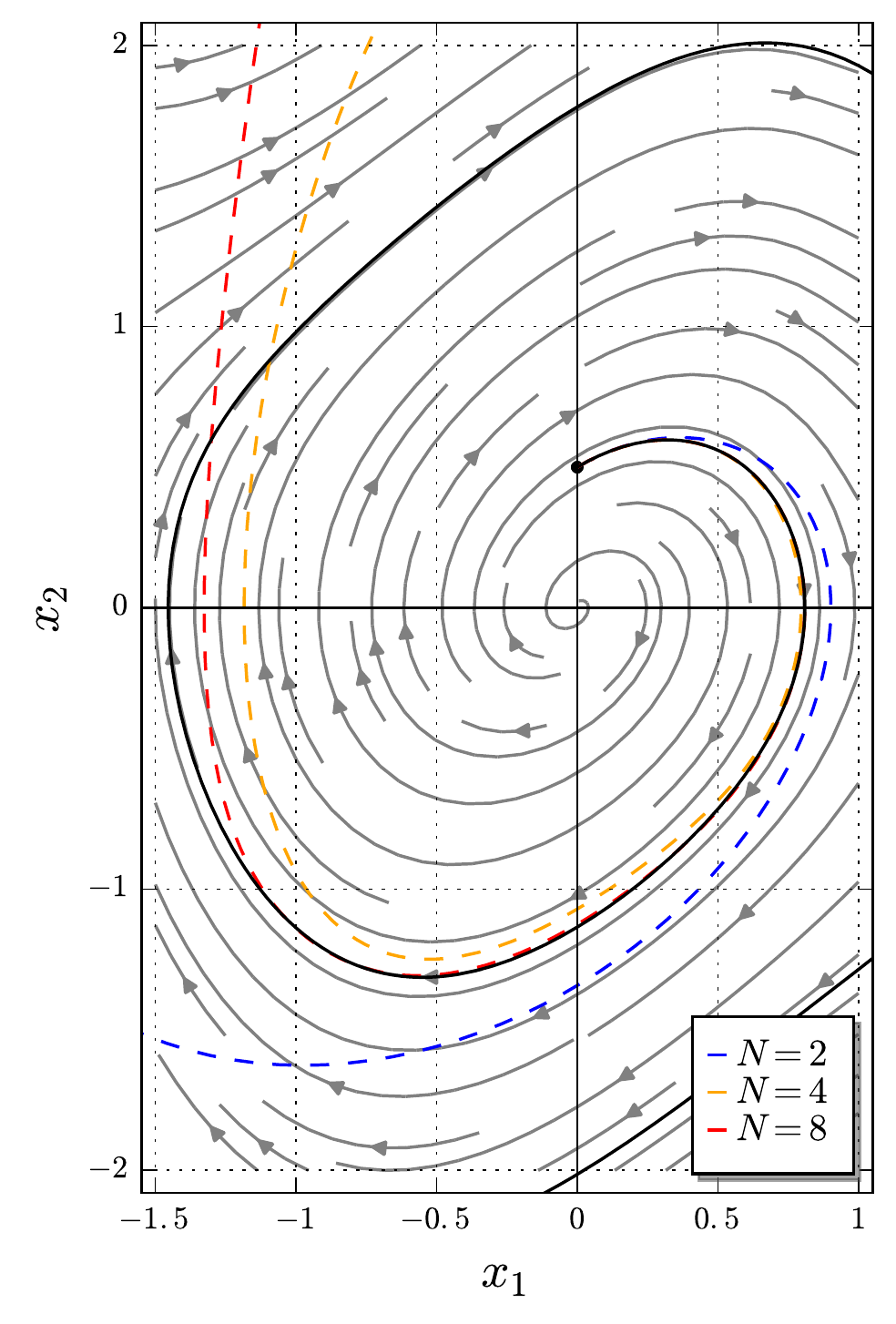}\label{fig:vanderpol_pp}
	} 
	\subfigure[Evolution of $x_1(t)$ as a function of time]{
		\includegraphics[height=210pt]{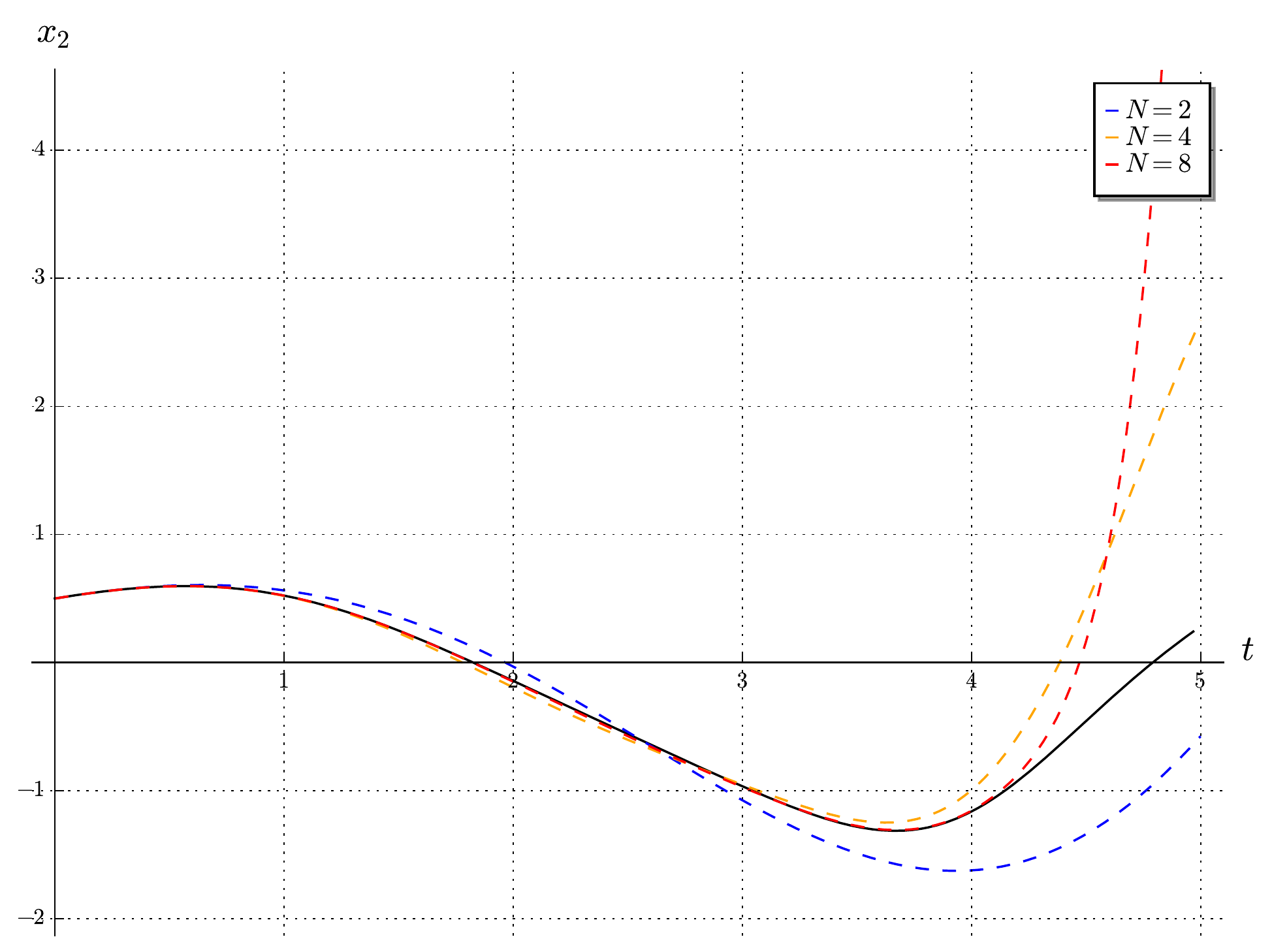}\label{fig:vanderpol_x1t}
	}	
	\subfigure[Error bound envelopes]{
		\includegraphics[height=210pt]{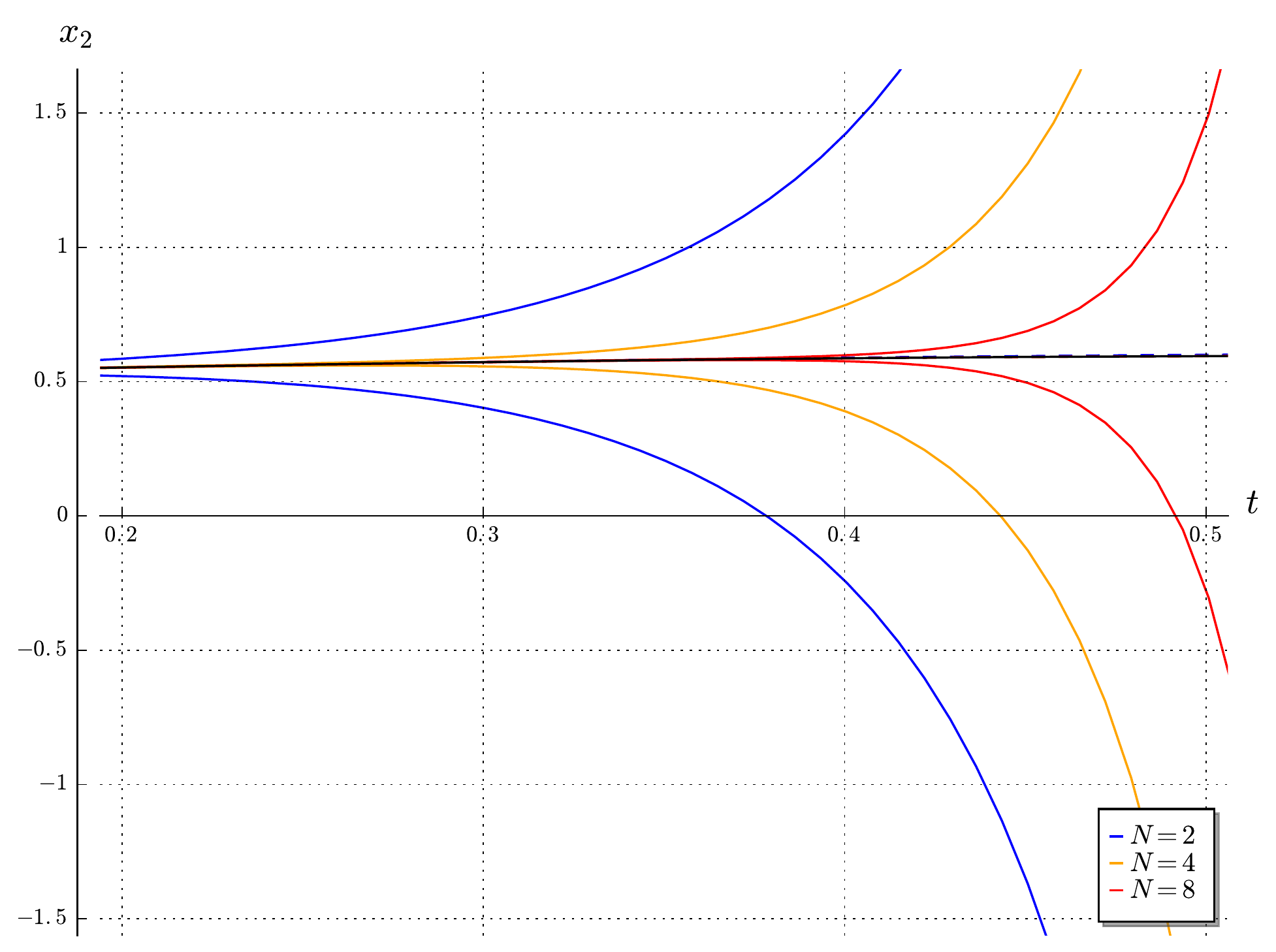}\label{fig:vanderpol_errorbnd}
	}
	\caption{Simulation of the Van der Pol oscillator: solution obtained with Carleman linearization truncated at different orders (dashed, orders 2, 4 and 8) and the solution to the nonlinear ODE (solid line). The initial condition is $x_0 = (0, 0.5)^\transp$ and the parameters are $r = 0.6$ and $\omega=1$.}
\end{figure}

As an illustrative example, consider the Van der Pol oscillator, which is a non-conservative system with non-linear damping, described by the equations
\begin{equation} \label{eq:vanderpol}
\left\{ 
\begin{aligned}
x'_1 &= x_2 \\ 
x'_2 &= - \omega^2 x_1   + r(1-x_1^2)x_2 
\end{aligned}
\right.
\end{equation}
Here $\omega>0$ is the natural frequency of the oscillator, and $r > 0$ is a scalar parameter indicating the damping factor. Setting $x := (x_1,x_2)^\transp$, system \eqref{eq:vanderpol} written in the standard ODE form \eqref{eq:S_ode} is 
$$
x'(t) = F_1 x(t) + F_3 x^{[3]}(t),
$$ 
with
$F_1 \in \mathbb{R}^{2\times 2}$ given by
\begin{equation*}
F_1 = \begin{pmatrix}
0 & 1 \\ -\omega^2 & r
\end{pmatrix},
\end{equation*}
and $F_3 \in \mathbb{R}^{2\times 8}$
\begin{equation*}
F_3 = \begin{pmatrix}
0 & 0 & 0 & 0 & 0 & 0 & 0 & 0 \\
0 & -r & 0 & 0 & 0 & 0 & 0 & 0 
\end{pmatrix}. 
\end{equation*}
(the quadratic term is identically zero for this system, $F_2 = \mathbb{O}_{2\times 4}$). The supremum norms are $||F_1|| = \max \{1, \omega^2+r\}$ and $||F_3|| = r$ respectively.

In Figure \ref{fig:vanderpol_pp} we plot the solution of the finite-dimensional linear system, \eqref{eq:TruncSysMatrix}, for different values of truncation order $N$. For validation, we plot the solution of the nonlinear system \eqref{eq:vanderpol} obtained by a  4th order classical Runge-Kutta method. In Figure \ref{fig:vanderpol_x1t} we show the coordinate $x_2(t)$ as a function of time. Increasing the order $N$ improves the quality of the approximation on a longer time interval, but at the same time, it diverges faster to infinity closer to the range of validity of the approximation.

The error bound from Theorem \ref{theorem:ErrorBound2} is represented in Figure \ref{fig:vanderpol_errorbnd} for different values of $N=2,4$ and $8$, which is summed to the actual solutions and we take $\hat{x}_2(t) \pm \mathcal{E}_2(t)$. The convergence radius of the error formula for this choice of parameters is $T^* \approx 0.58$. We observe that the error bound provides an enclosing envelope for the solution. This bound is conservative, as it is clear by comparison to the actual evolution of the linearized solutions at different orders from Figure \ref{fig:vanderpol_x1t}.

\section{Proofs}
\label{sec:Proofs}

\subsection{Proof of Theorem 1} \label{ssec:errorBoundIProof}

In principle we can find an explicit formula for $\varepsilon(t)$ by a straightforward integration of the ODE satisfied by the errors, 
\begin{equation}
\eta'_i(t) = A^i_i \eta_i(t) + A^i_{i+1}\left( y_{i+1}(t)-\hat{y}_{i+1}(t)\delta_{i<N} \right), \qquad 1 \leq i \leq N, \label{eq:errorseq}
\end{equation}
obtained by differentiating \eqref{eq:errorithblock} and substituting with \eqref{eq:QuadSystemInfDim} and \eqref{eq:QuadSystemTrunc}. However, the coupling at different $i$ makes this computation cumbersome. A better approach, similar to the one in \cite{bellman1962some} for the scalar case, is to systematically use backward-substitution. This consists of integrating \eqref{eq:errorseq} for decreasing $i$, starting from $i=N$, then $i=N-1$, until $i=1$.  To proceed further it is convenient to introduce the following function and a time-dependent norm estimate.
\begin{lemma} \label{lemma:Hestimate}
For each $i=1,\ldots,N$ and $t,s \in \R^+$, $0\leq s \leq t$, let 
\begin{equation*}
H_{i}(t,s) := e^{A^i_i (t-s)} A^i_{i+1}.
\end{equation*}
Then
\begin{equation}
||H_i(t,s)|| \leq i e^{i\mu(F_1)(t-s)} ||F_2||. \label{eq:EstimateHi}
\end{equation}
\begin{proof}
Using the submultiplicativity property of the supremum norm together with Proposition \ref{prop:PropAijFjestimate}, it follows that
\begin{equation*}
||H_i(t,s)|| \leq ||e^{A^i_i(t-s)}|| ~ ||A^i_{i+1}|| \leq i e^{i\mu(F_1)(t-s)} ||F_2||. 
\end{equation*}
\end{proof}
\end{lemma}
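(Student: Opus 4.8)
The plan is to separate the two factors of $H_i(t,s)$ using the submultiplicativity of the induced supremum norm, writing $\norm{H_i(t,s)} \leq \norm{e^{A^i_i(t-s)}}\,\norm{A^i_{i+1}}$, and then to bound each factor on its own. The factor $\norm{A^i_{i+1}}$ is disposed of immediately: it is the case $j=1$ of Proposition \ref{prop:PropAijFjestimate}, which gives $\norm{A^i_{i+1}} \leq i\norm{F_2}$. Everything then reduces to controlling the exponential factor, for which I would use property (iii) of the logarithmic norm, namely $\norm{e^{B}} \leq e^{\mu(B)}$, applied to $B = A^i_i(t-s)$. Since $t-s \geq 0$ and $\mu$ is positively homogeneous with respect to nonnegative scalars (clear from the explicit formula for $\mu_\infty$), this yields $\norm{e^{A^i_i(t-s)}} \leq e^{(t-s)\mu(A^i_i)}$, so the whole statement reduces to the single estimate $\mu(A^i_i) \leq i\mu(F_1)$.

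To establish $\mu(A^i_i)\leq i\mu(F_1)$, I would start from the defining formula \eqref{eq:defiBij} in the case $j=1$, which reads $A^i_i = \sum_{\nu=1}^i \Idn^{[\nu-1]} \otimes F_1 \otimes \Idn^{[i-\nu]}$. Sub-additivity of the logarithmic norm then gives $\mu(A^i_i) \leq \sum_{\nu=1}^i \mu\!\left(\Idn^{[\nu-1]} \otimes F_1 \otimes \Idn^{[i-\nu]}\right)$, so it suffices to show that each of the $i$ summands equals $\mu(F_1)$; summing the $i$ identical contributions then produces the factor $i$. Combining this with the two facts above gives exactly $\norm{H_i(t,s)} \leq i\,e^{i\mu(F_1)(t-s)}\norm{F_2}$.

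The main obstacle is the identity $\mu(\Idn^{[\nu-1]} \otimes F_1 \otimes \Idn^{[i-\nu]}) = \mu(F_1)$, because Lemma \ref{lemma:crossnormLogNorm} only covers an identity factor on the right, $\mu(A\otimes\Idm) = \mu(A)$, whereas here the identity blocks can sit on the left as well. The clean way around this is to observe that an identity factor on the left is harmless: $\Idn^{[\nu-1]} \otimes M$ is block diagonal with $n^{\nu-1}$ copies of $M$ along the diagonal, and since every row of a block-diagonal matrix is supported inside a single block, the formula $\mu_\infty(\,\cdot\,) = \max_i(\mathrm{Re}\,a_{ii} + \sum_{j\neq i}|a_{ij}|)$ gives $\mu(\Idn^{[\nu-1]} \otimes M) = \mu(M)$. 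Taking $M = F_1 \otimes \Idn^{[i-\nu]}$ and applying Lemma \ref{lemma:crossnormLogNorm} to $M$ then yields $\mu(\Idn^{[\nu-1]} \otimes F_1 \otimes \Idn^{[i-\nu]}) = \mu(F_1 \otimes \Idn^{[i-\nu]}) = \mu(F_1)$, as required. Equivalently, one could note that the three-fold product is related to $F_1\otimes\Idn^{[i-1]}$ by a perfect-shuffle permutation similarity, under which $\mu_\infty$ is invariant since simultaneous row and column permutations leave the defining maximum unchanged.
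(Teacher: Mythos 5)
Your proposal is correct and follows essentially the same route as the paper: split $\norm{H_i(t,s)}$ by submultiplicativity, bound $\norm{A^i_{i+1}}\leq i\norm{F_2}$ via Proposition \ref{prop:PropAijFjestimate}, and control the exponential through the logarithmic norm. In fact your write-up is more complete than the paper's one-line proof, which silently uses $\mu(A^i_i)\leq i\mu(F_1)$; you correctly observe that Lemma \ref{lemma:crossnormLogNorm} only covers identity factors on the right and supply the missing block-diagonal argument for $\mu(\Idn^{[\nu-1]}\otimes M)=\mu(M)$, which is exactly the detail needed to make the estimate rigorous.
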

The explicit computation of the following multiple integral is relegated to Appendix \ref{app:ProofPropIntegrals}. 
\begin{lemma} \label{lemma:RecurrentIntegralTN}
For all $N\geq 1$, and $s_N > 0$, 
\begin{equation}
\int_{0}^{s_N} \cdots \int_{0}^{s_2} \int_{0}^{s_1} e^{a \left( -Ns_0 +  \sum\limits_{i=1}^{N} s_i \right)} \dd s_0 \dd s_1 \cdots \dd s_{N-1}  = \dfrac{(e^{a s_N}-1)^N}{N!a^{N}}. \label{eq:LemmaRecurrentIntegralTN}
\end{equation}
The result holds for all $a \in\R$, and the right-hand side for $a = 0$ reduces to $\dfrac{s_N^N}{N!}$.
\end{lemma}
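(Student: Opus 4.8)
The plan is to evaluate the iterated integral by a single linear change of variables that turns the integration domain into an ordered simplex and the integrand into a fully symmetric function, after which the value follows from a standard symmetrization identity, so that the binomial cancellations one would otherwise meet by integrating one variable at a time are avoided entirely.

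First I would read off the geometry. Pairing each differential with its limit, the innermost $\int_0^{s_1}\dd s_0$, then $\int_0^{s_2}\dd s_1$, and so on up to $\int_0^{s_N}\dd s_{N-1}$, the domain is the simplex $\{0 \le s_0 \le s_1 \le \cdots \le s_{N-1} \le s_N\}$ with $s_N$ held fixed, and the exponent rewrites as $-N s_0 + \sum_{i=1}^N s_i = \sum_{i=1}^N (s_i - s_0)$. This suggests the substitution $u_j := s_j - s_0$ for $j = 1,\ldots,N$ (with $s_N$ a parameter, so that $u_N = s_N - s_0$ is a genuine integration variable through its dependence on $s_0$). I would check that this maps the domain bijectively onto $\{0 \le u_1 \le \cdots \le u_N \le s_N\}$: the ordering $s_0 \le \cdots \le s_{N-1} \le s_N$ gives $0 \le u_1 \le \cdots \le u_N$, while $s_0 \ge 0$ gives $u_N \le s_N$, and conversely $s_0 = s_N - u_N$ together with $s_j = u_j + s_N - u_N$ recovers the original variables. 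The Jacobian matrix has a column of $-1$'s (from $\partial u_j/\partial s_0$) together with an embedded $(N-1)\times(N-1)$ identity block; a cofactor expansion along the last row gives determinant $(-1)^N$, hence unit absolute value. Since the exponent becomes exactly $a\sum_{j=1}^N u_j$, the left-hand side of \eqref{eq:LemmaRecurrentIntegralTN} equals $\int_{0 \le u_1 \le \cdots \le u_N \le s_N} e^{a\sum_{j=1}^N u_j}\,\dd u$.

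Second, I would evaluate this symmetric simplex integral. Because $e^{a\sum_j u_j}$ is invariant under permutations of the $u_j$, the hypercube $[0,s_N]^N$ splits, up to a null set, into the $N!$ congruent ordered regions $\{u_{\sigma(1)} \le \cdots \le u_{\sigma(N)}\}$, each contributing the same amount. Hence $\left(\int_0^{s_N} e^{au}\,\dd u\right)^N = \prod_{j=1}^N \int_0^{s_N} e^{a u_j}\,\dd u_j = N!\int_{0 \le u_1 \le \cdots \le u_N \le s_N} e^{a\sum_j u_j}\,\dd u$. Using $\int_0^{s_N} e^{au}\,\dd u = (e^{a s_N}-1)/a$ and dividing by $N!$ yields the claimed value $(e^{a s_N}-1)^N/(N!\,a^N)$. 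The degenerate case $a=0$ is handled directly: the exponent vanishes, so the integral is the volume of the simplex, $s_N^N/N!$, matching the stated limiting value (which also follows from $(e^{a s_N}-1)/a \to s_N$).

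The routine parts are the two elementary one-dimensional integrals and the permutation argument. The only step demanding genuine care is the change of variables: one must confirm that $u_j = s_j - s_0$ is a \emph{bijection} between the two simplices, not merely a local diffeomorphism, and that its Jacobian determinant is $\pm 1$ so that no spurious factor is introduced. Once that is in place, everything else is bookkeeping, and passing through the symmetric form removes any need for an explicit binomial identity.
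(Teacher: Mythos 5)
Your proof is correct, and it takes a genuinely different (and cleaner) route than the paper's. The paper works in the original variables: since $s_0$ plays a distinguished role through the factor $e^{-aNs_0}$, it can only exploit the symmetry of $e^{a\sum_{i=1}^N s_i}$ in $s_1,\dots,s_{N-1}$, so it replaces the ordered simplex by $[0,s_N]^{N}\cap\{s_0\le\min(s_1,\dots,s_{N-1})\}$ at the cost of a factor $1/(N-1)!$, integrates out $s_1,\dots,s_{N-1}$ to reach $\tfrac{e^{as_N}}{(N-1)!}\int_0^{s_N}e^{-aNs_0}\bigl(\tfrac{e^{as_N}-e^{as_0}}{a}\bigr)^{N-1}\dd s_0$, and then has to expand by the binomial theorem, integrate term by term, and re-sum to recognize $(e^{as_N}-1)^N$. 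Your affine substitution $u_j=s_j-s_0$ removes the asymmetry at the source: the integrand becomes fully symmetric in $u_1,\dots,u_N$, the full $N!$-fold symmetrization of the cube applies, and the integral factors immediately as $\tfrac{1}{N!}\bigl(\int_0^{s_N}e^{au}\,\dd u\bigr)^{N}$ with no binomial identity needed. The two points you single out as requiring care do check out: the ordering and endpoint constraints translate exactly into $\{0\le u_1\le\cdots\le u_N\le s_N\}$ in both directions, and the Jacobian determinant is $(-1)^N$, hence of unit absolute value. Your direct treatment of $a=0$ as the volume of the simplex is also fine and is consistent with the continuity-in-$a$ argument the paper uses for that case.
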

The error can be expressed exactly as a nested integral involving the $(N+1)$-th order term of the exact solution.
\begin{proposition}
The error on the first block, $\eta_1(t) = y_1(t)-\hat{y}_1(t)$, is 
\begin{align}
\begin{split}
 \eta_1(t) &=  \int_{0}^t  H_1(t,s_{N-1})  \int_{0}^{s_{N-1}}  H_2(s_{N-1},s_{N-2}) \\ 
&\cdots \int_{0}^{s_2} H_{N-1}(s_{2},s_{1})    \int_{0}^{s_1} H_N(s_{1},s) y_{N+1}(s) \dd s\dd s_{1}\cdots\dd s_{N-1}. \label{eq:E1ExactFormulak2}
\end{split}
\end{align}

\begin{proof}
We proceed by backwards substitution. For $i=N$, we have 
\begin{equation*}
\dfrac{d \eta_N}{dt}(t) = A^N_N \eta_N(t) + A^N_{N+1} y_{N+1}(t).
\end{equation*}
By integration and since $\eta_N(0)=0$, 
\begin{subequations}
\begin{align*}
\eta_N(t) &= \int_0^t e^{A^N_N (t-s)} A^N_{N+1} y_{N+1}(s) \dd s \\
&= \int_0^t H_N(t,s) y_{N+1}(s) \dd s. 
\end{align*}
\end{subequations}
For $i=N-1$, 
\begin{equation*}
\dfrac{d \eta_{N-1}}{dt}(t) = A^{N-1}_{N-1} \eta_{N-1}(t) + A^{N-1}_N \eta_N(t).
\end{equation*}
Again integrating and using that $\eta_{N-1}(0)=0$, 
\begin{subequations}
\begin{align*}
\eta_{N-1}(t) &= \int_0^t e^{A^{N-1}_{N-1} (t-s_1)} A^{N-1}_N \eta_N(s_1) \dd s_1 \\
&= \int_0^t H_{N-1}(t,s_1) \eta_N(s_1) \dd s_1 \\
&= \int_0^t H_{N-1}(t,s_1) \left( \int_0^{s_1} H_N(s_1,s) y_{N+1}(s) \dd s \right) \dd s_1 \\
&= \int_0^t H_{N-1}(t,s_1) \int_0^{s_1} H_N(s_1,s) y_{N+1}(s) \dd s\dd s_1.
\end{align*}
\end{subequations}
Iterating this procedure until $i=1$ we recover formula \eqref{eq:E1ExactFormulak2}.
\end{proof}

\end{proposition}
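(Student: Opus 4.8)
The plan is to integrate the coupled system of block errors \eqref{eq:errorseq} by \emph{backward substitution}: solve for $\eta_N$ first, then descend through $\eta_{N-1},\eta_{N-2},\ldots,\eta_1$, assembling \eqref{eq:E1ExactFormulak2} one nested integral at a time. Two structural features of \eqref{eq:errorseq} drive the argument. First, since the exact and truncated systems \eqref{eq:QuadSystemInfDim} and \eqref{eq:QuadSystemTrunc} share the initial data $x_0^{[i]}$, every block error vanishes at the origin, i.e.\ $\eta_i(0)=0$; this kills the homogeneous term at each integration step. Second, the indicator $\delta_{i<N}$ decouples the top block: at $i=N$ the truncation drops the coupling $A^N_{N+1}\hat{y}_{N+1}$, so $\eta_N$ is forced directly by the \emph{exact} next block $y_{N+1}$, whereas for $i<N$ the forcing is the error $\eta_{i+1}$ of the block above.

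First I would handle the base case $i=N$, where \eqref{eq:errorseq} becomes $\eta_N' = A^N_N\eta_N + A^N_{N+1}y_{N+1}$. This is a constant-coefficient linear inhomogeneous equation, and variation of parameters together with $\eta_N(0)=0$ gives $\eta_N(t)=\int_0^t e^{A^N_N(t-s)}A^N_{N+1}\,y_{N+1}(s)\,\dd s = \int_0^t H_N(t,s)\,y_{N+1}(s)\,\dd s$ in the notation of Lemma \ref{lemma:Hestimate}. I would then descend: for each $i<N$ the equation reads $\eta_i'=A^i_i\eta_i+A^i_{i+1}\eta_{i+1}$, and the same Duhamel step yields $\eta_i(t)=\int_0^t H_i(t,\sigma)\,\eta_{i+1}(\sigma)\,\dd\sigma$. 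Substituting the previously obtained expression for $\eta_{i+1}$ appends one further inner integral, and iterating from $i=N-1$ down to $i=1$ produces exactly the nested form \eqref{eq:E1ExactFormulak2}, with $y_{N+1}$ sitting at the innermost level and the outermost integral running over $[0,t]$.

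The analysis here is light; the effort is essentially bookkeeping. The only genuine computational ingredient is the one-step Duhamel formula, so the main thing to get right is the consistent relabeling of the dummy variables $s,s_1,\ldots,s_{N-1}$ and the matching integration limits $\int_0^t\!\int_0^{s_{N-1}}\!\cdots\int_0^{s_1}$ as the recursion unwinds. The single conceptual point I would emphasize is the asymmetric role of the index $i=N$: it is the \emph{only} level at which the exact solution is injected, through $y_{N+1}$, so after $N$ integrations the whole first-block error $\eta_1$ is a functional of the single truncated tail $y_{N+1}$. This is precisely what makes the ensuing estimate tractable, since one can then bound $\norm{\eta_1}$ by feeding the growth bound \eqref{eq:EstimateHi} for each $H_i$ and the identity $\norm{y_{N+1}}=\norm{x}^{N+1}$ from \eqref{eq:ithpowerIdentity} into the multiple-integral evaluation \eqref{eq:LemmaRecurrentIntegralTN}.
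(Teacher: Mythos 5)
Your proposal is correct and follows essentially the same route as the paper: the base case at $i=N$ isolated by the indicator $\delta_{i<N}$, the Duhamel/variation-of-parameters step with $\eta_i(0)=0$ killing the homogeneous term, and backward substitution from $i=N$ down to $i=1$ to assemble the nested integral. Nothing is missing.
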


\begin{proof}[Proof of Theorem \ref{theorem:ErrorBound1}.] 
We start from \eqref{eq:E1ExactFormulak2}, taking norms on both sides, then 
\begin{align*}
\begin{split}
 ||\eta_1(t)|| &\leq \int_{0}^t \norm{H_1(t, s_{N-1})} \int_{0}^{s_{N-1}}  ||H_2(s_{N-1},s_{N-2})|| \cdots  \\ 
&\cdots \int_{0}^{s_2} ||H_{N-1}(s_{2},s_{1})||    \int_{0}^{s_1} ||H_N(s_{1},s)|| ~||y_{N+1}(s)|| \dd s\prod_{i=1}^{N-1} \dd s_i.
\end{split}
\end{align*}
If $\norm{x}_t = \sup_{s \in [0,t]} \norm{x(s)}$ and using \eqref{eq:ithpowerIdentity} and \eqref{eq:changeofvariables}, it follows that $||y_{N+1}(s)|| \leq ||x||_t^{N+1}$ for all $s \in [0,t]$, and from Lemma \ref{lemma:Hestimate},
\begin{align*}
\begin{split}
\norm{\varepsilon(t)} &\leq ||F_2||^N \int_{0}^t  e^{\mu(F_1)(t-s_{N-1})}   \int_{0}^{s_{N-1}}  2 e^{2\mu(F_1)(s_{N-1} - s_{N-2})}   \cdots  \\ 
&\cdots \int_{0}^{s_2} (N-1) e^{(N-1)\mu(F_1)(s_2 - s_1)} \int_{0}^{s_1} N e^{N\mu(F_1)(s_1 - s)}\norm{x}_t^{N+1} \dd s\prod_{i=1}^{N-1} \dd s_i \\
&= \norm{x}_t^{N+1} \norm{F_2}^N N! ~ G_N(\mu(F_1), t),
\end{split}
\end{align*}
where we have conveniently defined
\begin{align}
\begin{split}
G_N(a, t) &:= \int_{0}^t  e^{a (t-s_{N-1})}   \int_{0}^{s_{N-1}}  e^{2a(s_{N-1} - s_{N-2})} \\
&\cdots  \int_{0}^{s_2}  e^{(N-1)a(s_2 - s_1)}   \int_{0}^{s_1} e^{Na(s_1 - s)} \dd s\prod_{i=1}^{N-1} \dd s_i.
\end{split}
\end{align}
We apply Lemma \ref{lemma:RecurrentIntegralTN}, first by renaming $t \to s_N$ and $s \to s_0$, then setting $a = \mu(F_1)$ and $s_N = t$,
\begin{subequations}
\begin{align*}
G_N(a, s_N) &= \int_{0}^{s_N}   \cdots \int_{0}^{s_1} e^{a \sum\limits_{i=0}^{N-1} (N-i) (s_{i+1} - s_i ) } \prod_{i=0}^{N-1} \dd s_i \\ 
&= \int_{0}^{s_N}    \cdots  \int_{0}^{s_1} e^{a \left( -Ns_0 +  \sum\limits_{i=1}^{N} s_i \right)} \prod_{i=0}^{N-1} \dd s_i \\
&= \frac{(e^{a t} - 1 )^N}{N! a^{N}},
\end{align*}
\end{subequations}
Note that the last equality holds even for $a=0$, where the right-hand side exists by continuity.
 Choosing $\alpha > 0$ such that $\alpha \geq \norm{x}_t$, we obtain the formula  
\begin{equation}
\norm{\varepsilon(t)} \leq \frac{\alpha^{N+1} \norm{F_2}^N}{\mu(F_1)^N} (e^{\mu(F_1) t} - 1)^N, \label{eq:estimatee1t1}
\end{equation} 
as claimed. where the value for $\mu(F_1)=0$ is defined by continuity.

To find the radius of convergence, we use the estimate \eqref{eq:estimatee1t1}. If $\mu(F_1)\neq0$, let us rearrange the right-hand side of \eqref{eq:estimatee1t1} as $$\norm{\varepsilon(t)} \leq \gamma_t \left[ \gamma_t  (e^{\norm{(F_1)} t} - 1)\right]^N,$$ where $\gamma_t := \frac{ \norm{x}_t \norm{F_2}}{\mu(F_1)}$.  The right-hand side converges to zero as $N\to \infty$ provided that the term in square brackets, which is non-negative, has modulus strictly smaller than $1$, that is, $\gamma_t (e^{\mu(F_1) t} - 1) < 1$, and the formula for the radius of convergence follows.
Finally, when $\mu(F_1)=0$, estimate \eqref{eq:estimatee1t1} becomes
\begin{equation}
\norm{\varepsilon(t)} \leq \alpha^{N+1} \norm{F_2}^Nt^N, \label{eq:estimatee1t2}
\end{equation}
which converges when $\alpha\norm{F_2}t<1$. The obtained bound matches exactly the
limit value of the formula in the case where $\mu(F_1)\neq 0$, thus we can use the
same bound for all cases.
\end{proof}

\subsection{Proof of Theorem 2} \label{ssec:errorBoundIIProof}

The idea of the proof is to construct a recurrence for the error term, and majorate it by a linear recurrence inequality. Then, we explicitly solve this linear recurrence inequality by the method of generating functions.

\subsubsection{Path sums}

Let us develop the analytic solutions of \eqref{eq:QuadSystemInfDim},
\begin{equation}
y_i(t) = \sum_{\nu = 0}^\infty \chi_{i,\nu} \frac{t^\nu}{\nu!},\qquad i \in \N, \label{eq:yiAnalyticDev}
\end{equation}
with an initial condition compatible with the embedding, i.e. $y_i(0) = x_0^{[i]}$, $i\geq 1$, and
\begin{equation}
\chi_{i,\nu} := \frac{d^\nu y_i}{d t^\nu}(0) , \qquad i\geq 1,~\nu \geq 0, \label{eq:yichicoeff}
\end{equation}
where $\dim \chi_{i,\nu} = n^i$ for all $\nu \geq 0$. By convention $\nu=0$ corresponds to the function itself, that is, we set $y_i^{(0)} (t) := y_i(t)$. The next step is to work out the coefficients $\chi_{i,\nu}$, by taking higher order derivatives of \eqref{eq:QuadSystemInfDim}. To build some intuition, consider an example.

\begin{example}
For the second order derivative, we differentiate \eqref{eq:QuadSystemInfDim}, obtaining
\begin{align}
y''_i&= A^i_i y'_i + A^i_{i+1} y'_{i+1} \nonumber\\
&= A^i_i \left( A^i_i y_i + A^i_{i+1} y_{i+1} \right) + A^i_{i+1} \left( A^{i+1}_{i+1} y_{i+1} + A^{i+1}_{i+2} y_{i+2}\right) \nonumber \\
&= \left( A^i_i  A^i_i \right) y_i + \left( A^i_i A^i_{i+1} + A^i_{i+1} A^{i+1}_{i+1} \right) y_{i+1} + \left( A^i_{i+1}A^{i+1}_{i+2} \right)  y_{i+2}. \label{eq:d2yEx1}
\end{align}
The terms in each different order $i$, $i+1$ and $i+2$ have been grouped, because we shall associate these terms to their corresponding path sums as defined below.
\end{example} 
First we need to define what is a single path.

\begin{definition} \label{defi:pathco}
A \emph{jump} between sites $i$ and $i+j$, for $i\geq 1$, $j\geq 0$, is the linear map $A^{i}_{i+j} : \R^{n^{i+j}} \to \R^{n^i}$. The \emph{length} of a jump is the number of sites travelled to the right. For example, the length of the jump $A^{i}_{i+j}$ is $j$. A \emph{path} between sites $i$ and $i+j$ of $\nu$ jumps and order $q$ is an ordered sequence of products of matrices
\begin{equation*}
\joli{P}_{i,i+j}^{(\nu,q)}(\alpha) = \underset{\nu \text{ factors}}{\underbrace{A^{\alpha_1}_{\alpha_2}A^{\alpha_2}_{\alpha_3}\cdots A^{\alpha_{m-2}}_{\alpha_{m-1}} A^{\alpha_{m-1}}_{\alpha_m}}},
\end{equation*} 
where $i\geq 1$, $j\geq 0$, $\nu \geq 0$, $q \geq 0$, and where $\alpha = (\alpha_1,\ldots,\alpha_m)$ is a multi-index of ordered integers from $i$ to $i+j$, i.e. $i = \alpha_1 \leq \alpha_2 \leq \ldots \leq  i+j = \alpha_m$. By convention, the empty product $\nu=0$ is defined as the identity.
Finally, we say that \emph{order} of the path is $q = \max \{\alpha_2-\alpha_1,\ldots, \alpha_{m}-\alpha_{m-1}\}$. It corresponds to the maximal individual jump length attained in the path. 
\end{definition}

Now we turn into the definition of a (combinatorial) path sum. 
\begin{definition} \label{defi:copathsum}
The \emph{path sum} $C^{(\nu,q)}_{i,i+j}$ is defined as
\begin{equation}
C^{(\nu,q)}_{i,i+j} = \sum_{\alpha} \joli{P}^{(\nu,q)}_{i,i+j}(\alpha), \label{defi:PathSum}
\end{equation}
where the sum is taken over all paths of $\nu$ jumps and order $q$ between sites $i$ and $i+j$. Moreover, when it is understood that $q=1$ is fixed, we set 
\begin{equation}
C^{(\nu)}_{i,i+j} := C^{(\nu,1)}_{i,i+j}.  \label{defi:copathsumSimple}
\end{equation}
\end{definition}

\addtocounter{example}{-1}
\begin{example}[continuation] 
We can write the computation \eqref{eq:d2yEx1} in condensed form as
\begin{subequations}
\begin{align*}
y_i'' &= C_{i,i}^{(2)} y_i + C_{i,i+1}^{(2)} y_{i+1} + C_{i,i+2}^{(2)}  y_{i+2}. 
\end{align*}
\end{subequations}
The path sums are pictorially represented as

\begin{equation*}
\begin{tikzpicture}[->,>=stealth',shorten >=1pt,auto,node distance=2.2cm,
semithick]
\tikzstyle{every state}=[fill=white,draw=black,text=black,inner sep=0pt,minimum size=1cm]

\node (V) {\large $C_{i,i}^{(2)} ~~ \equiv$};
\node[state] (i) [right of=V,node distance=1.8cm]{$i$};

\path (i) edge[loop above] 	node {$A^i_i$} (i1);
\path (i) edge[loop right] 	node {$A^i_i$} (i2);

\end{tikzpicture}
\end{equation*}
Similarly for the other terms, 
\begin{equation*}
\begin{tikzpicture}[->,>=stealth',shorten >=1pt,auto,
semithick]
\tikzstyle{every state}=[fill=white,draw=black,text=black,inner sep=0pt,minimum size=1cm]

\node (V) {\large $C_{i,i+1}^{(2)} ~~ \equiv$};
\node[state](A)   [right of=V,node distance=2.2cm]   {$i$};
\node[state] (C) [right of=A,node distance=2.2cm]  {$i+1$};

\path[->] (A) edge[bend left]              node {$A^i_{i+1}$} (C)
edge [loop above] node {$A^i_i$} (A);

\node (W) [right of=C,node distance=1.1cm] {\large $+$};
\node[state](B)   [right of=W,node distance=1.1cm]   {$i$};
\node[state] (D) [right of=B, node distance=2.2cm]  {$i+1$};
\path[->] (B) edge[bend left]              node {$A^i_{i+1}$} (D);
\path[->] (D) edge [loop above] node {$A^{i+1}_{i+1}$} (D);

\end{tikzpicture}
\end{equation*}
and
\begin{equation*} 
\begin{tikzpicture}[->,>=stealth',shorten >=1pt,auto,
semithick]
\tikzstyle{every state}=[fill=white,draw=black,text=black,inner sep=0pt,minimum size=1cm]

\node (V) {\large $C_{i,i+2}^{(2)} ~~ \equiv$};
\node[state](A)   [right of=V,node distance=2.2cm]   {$i$};
\node[state] (C) [right of=A,node distance=2.2cm]  {$i+1$};
\node[state] (D) [right of=C,node distance=2.2cm]  {$i+2$};
\path[->] (A) edge[bend left]              node {$A^i_{i+1}$} (C);
\path[->] (C) edge[bend left]              node {$A^{i+1}_{i+2}$} (D);
\node[state](B)   [right of=W,node distance=1.1cm]   {$i+2$};
\end{tikzpicture}
\end{equation*}
 
\end{example}

\begin{remark}

A path $\joli{P}_{i,i+j}^{(\nu,q)}(\alpha)$ is non-empty only for $0 \leq j \leq \nu q$. In consequence, $C^{(\nu,q)}_{i,i+j}$ is zero for $j>\nu q$. In other words, to go from site $i$ to site $i+j$ we need to travel a distance $j$ to the right, and the maximum we can travel is taking all $\nu$ paths of the same maximal length $q$. This is illustrated in the diagram below:
\begin{equation*}
\begin{tikzpicture}[->,>=stealth',shorten >=1pt,auto,
semithick]
\tikzstyle{every state}=[fill=white,draw=black,text=black,inner sep=0pt,minimum size=1cm]

\node[state](A)   [left of=B,node distance=2.2cm]   {$i$};
\path[->] (A) edge[bend left]          node {$A^i_{i+q}$} (B);

\node[state] (B) [right of=A,node distance=2.2cm]  {$i+q$};
\path[-, loosely dotted, line width=2pt] (A) edge[]              node {} (B);

\node[state](C)   [right of=B,node distance=2.8cm]   {$\kappa$};
\path[-, loosely dotted, line width=2pt] (B) edge[]              node {} (C);

\node[state](D)   [right of=C,node distance=2.2cm]   {$\kappa+q$};
\path[-, loosely dotted, line width=2pt] (C) edge[]              node {} (D);

\path[->] (C) edge[bend left]              node {$A^{\kappa}_{\kappa+q}$} (D);

\end{tikzpicture}
\end{equation*}
where $\kappa$ is a shortcut for $i + (\nu-1)q$.
\end{remark}

\subsubsection{Recurrence for the path sums}

Next we explore the recurrence relation satisfied by the path sums. By hypothesis the ODE is quadratic ($k=2$), hence we can fix $q=1$ and, as we did above with the examples, only write the index corresponding to the number of jumps, $\nu$. 

\begin{proposition} \label{prop:recproofpsk2}
Let $i\geq 1$,  $\nu \geq 2$ and $0 \leq j \leq \nu$, and $q=1$. Then:
\begin{enumerate}
\item If $1 \leq j \leq \nu-1$, then
\begin{equation*}
C^{(\nu)}_{i,i+j} = C^{(\nu-1)}_{i,i+j}C^{(1)}_{i+j,i+j} + C^{(\nu-1)}_{i,i+j-1}C^{(1)}_{i+j-1,i+j}. \label{eq:FundRecPathsum}
\end{equation*}
Note that $C^{(1)}_{i+j,i+j}=A^{i+j}_{i+j}$, and that $C^{(1)}_{i+j-1,i+j} = A^{i+j-1}_{i+j}$. 
\item For $j=0$, $C^{(\nu)}_{i,i+j} = C^{(\nu)}_{i,i} = \underset{\nu \text{ times}}{\underbrace{A^i_i A^i_i \cdots A^i_i }}$. 
\item For $j=\nu$, $C^{(\nu)}_{i,i+j} = C^{(\nu)}_{i,i+\nu} = \underset{\nu \text{ terms}}{\underbrace{  A^i_{i+1} A^{i+1}_{i+2} \ldots A^{i+\nu-1}_{i + \nu }}}$.
\end{enumerate}

\begin{proof}
The extremal cases $j=0$ and $j=\nu$ are trivial. For the general recurrence, assume that $1 \leq j \leq \nu-1$, and observe that
\begin{equation*}
\begin{tikzpicture}[->,>=stealth',shorten >=1pt,auto,
semithick]
\tikzstyle{every state}=[fill=white,draw=black,text=black,inner sep=0pt,minimum size=1cm]

\node[state](A)   [left of=B,node distance=2.2cm]   {$i$};
\path[->] (A) edge[bend left]          node {$C^{(\nu)}_{i, i + j}$} (B);

\node[state] (B) [right of=A,node distance=2.2cm]  {$i + j$};
\path[-, loosely dotted, line width=2pt] (A) edge[]              node {} (B); 

\end{tikzpicture}
\end{equation*}
can be decomposed as:
\begin{equation*}
\begin{tikzpicture}[->,>=stealth',shorten >=1pt,auto,
semithick]
\tikzstyle{every state}=[fill=white,draw=black,text=black,inner sep=0pt,minimum size=1cm]

\node[state](A)   [left of=B, node distance=2.2cm]   {$i$};
\path[->] (A) edge[bend left]          node {$C^{(\nu -1)}_{i, i + j}$} (B);

\path[->] (B) edge[loop above]          node {$A^{i+j}_{i + j}$} (B);

\node[state] (B) [right of=A,node distance=2.2cm]  {$i + j$};
\path[-, loosely dotted, line width=2pt] (A) edge[]              node {} (B); 

\node[state] (C) [right of=B,node distance=1.6cm, draw=none]  {$+$};

\node[state](D)  [right of=C, node distance=1.6cm]   {$i$};

\node[state](E)  [right of=D, node distance=2.2cm]   {$\kappa$};
\path[->] (D) edge[bend left]    node {$C^{(\nu-1)}_{i, \kappa}$} (E);
\path[-, loosely dotted, line width=2pt] (D) edge[]              node {} (E); 

\node[state](F)  [right of=E, node distance=1.8cm]   {$i+j$};

\path[->] (E) edge[bend left]    node {$A^{\kappa}_{\kappa+1}$} (F);

\end{tikzpicture}
\end{equation*}
where $\kappa = i+j-1$.
We remark that the first diagram of the right-hand side is non-vanishing only if $j\leq \nu-1$, that is, if there are enough jumps so that removing one still allows to arrive to the site $i+j$. 

%
\end{proof}
\end{proposition}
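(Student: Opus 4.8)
The plan is to prove all three items by the same elementary device: partitioning the set of admissible paths according to their \emph{last} jump. The single structural fact that makes everything work is that we have fixed $q=1$ (Definition~\ref{defi:copathsum}), so each factor $A^{\alpha_m}_{\alpha_{m+1}}$ of a path in Definition~\ref{defi:pathco} has jump length $\alpha_{m+1}-\alpha_m\in\{0,1\}$; equivalently, every jump is either a self-loop $A^\ell_\ell$ or a unit step $A^\ell_{\ell+1}$. Throughout, I use that a path of $\nu$ jumps from $i$ to $i+j$ is an ordered product of $\nu$ such factors whose total displacement equals $j$.

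The two extremal items are settled by a displacement count. For $j=0$ (item~2), the total displacement is $0$ while each of the $\nu$ jumps contributes $0$ or $1$; hence every jump must be a self-loop, the only admissible multi-index is $\alpha=(i,\dots,i)$, and $C^{(\nu)}_{i,i}=(A^i_i)^\nu$. For $j=\nu$ (item~3), the displacement $\nu$ must be produced by $\nu$ jumps each of length at most $1$, forcing every jump to be a unit step; the unique admissible $\alpha=(i,i+1,\dots,i+\nu)$ then gives the stated telescoping product $A^i_{i+1}A^{i+1}_{i+2}\cdots A^{i+\nu-1}_{i+\nu}$.

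For the main recurrence (item~1, with $1\le j\le\nu-1$), I would fix a generic length-$\nu$ path from $i$ to $i+j$ and examine the site $\alpha_\nu$ visited just before the terminal site $i+j$. Since the final jump has length $0$ or $1$, exactly one of two disjoint possibilities holds: either $\alpha_\nu=i+j$, so the final factor is the self-loop $A^{i+j}_{i+j}=C^{(1)}_{i+j,i+j}$ and the preceding $\nu-1$ factors form a length-$(\nu-1)$ path from $i$ to $i+j$; or $\alpha_\nu=i+j-1$, so the final factor is the unit step $A^{i+j-1}_{i+j}=C^{(1)}_{i+j-1,i+j}$ and the preceding $\nu-1$ factors form a length-$(\nu-1)$ path from $i$ to $i+j-1$. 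Summing over each class and pulling the common terminal factor out of the sum by associativity yields
\begin{align*}
C^{(\nu)}_{i,i+j} &= \Bigl(\sum_\alpha \joli{P}^{(\nu-1)}_{i,i+j}(\alpha)\Bigr)A^{i+j}_{i+j} + \Bigl(\sum_\alpha \joli{P}^{(\nu-1)}_{i,i+j-1}(\alpha)\Bigr)A^{i+j-1}_{i+j}\\
&= C^{(\nu-1)}_{i,i+j}\,A^{i+j}_{i+j} + C^{(\nu-1)}_{i,i+j-1}\,A^{i+j-1}_{i+j},
\end{align*}
which is the claimed identity.

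The step that deserves genuine care — and the only place the argument can go wrong — is the bookkeeping that the ``last-jump'' decomposition really is a partition of the path set for every $j$ in the stated range, with no path lost, created, or double-counted. In particular one must check the boundary value $j=\nu-1$: there the first summand $C^{(\nu-1)}_{i,i+j}$ reaches the maximal site attainable in $\nu-1$ jumps (so it reduces to a single telescoping product rather than vanishing), and one should confirm that the degenerate cases are handled correctly by the convention that a path sum over an empty index set is zero, consistent with the non-vanishing range $0\le j\le\nu$ (with $q=1$) recorded in the preceding Remark. Once this is verified, the recurrence is immediate from distributivity, since no cancellation or reindexing is needed beyond factoring the shared final matrix out of each sum.
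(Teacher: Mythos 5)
Your proof is correct and follows essentially the same route as the paper: the paper's diagrammatic decomposition is exactly your partition of paths by their final jump (self-loop into $i+j$ versus unit step from $i+j-1$), with the terminal factor pulled out on the right, and the extremal cases settled by the same displacement count. Your explicit check of the boundary case $j=\nu-1$ matches the paper's closing remark that the first summand is non-vanishing only when $j\leq\nu-1$.
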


In the following proposition we use the combinatorial path sum to express the Taylor coefficients $\xi_{i,\nu}$ (see Eqs. \eqref{eq:yiAnalyticDev}-\eqref{eq:yichicoeff}) in terms of the linear maps $A^{i}_{i+j}$. 

\begin{proposition} \label{prop:derxik2}
For $\nu\geq 0$, $i\geq 1$, the following formula holds
\begin{equation} 
\frac{d^\nu y_i}{dt^\nu}(t) = \sum_{j=0}^\nu C_{i,i+j}^{(\nu)} y_{i+j}(t), \label{eq:devderlk2p}
\end{equation}
where $C_{i,i+j}^{(\nu)}$ is the combinatorial path sum defined in Eq. \eqref{defi:copathsumSimple}. By definition, $C_{i,i}^{(0)} = \Idn^{[i]}$.
\begin{proof}
We already proved the base cases $\nu=0,1,2$. Assume that the inequality holds for $\nu -1 \geq 2$, and for any $i\geq 1$, and let us prove that it holds for $\nu$. By the inductive hypothesis and the recurrence formula proved in Proposition \ref{prop:recproofpsk2},
\begin{subequations}
\begin{align*}
y^{(\nu)}_i &= \sum_{j=0}^{\nu-1} C_{i,i+j}^{(\nu-1)} y'_{i+j} \\ &= \sum_{j=0}^{\nu-1} C_{i,i+j}^{(\nu-1)}\left( A^{i+j}_{i+j} y_{i+j} + A^{i+j}_{i+j+1} y_{i+j+1} \right)  \\
&= \sum_{j=0}^{\nu-1} C_{i,i+j}^{(\nu-1)}  A^{i+j}_{i+j} y_{i+j} + \sum_{j=1}^{\nu}  C_{i,i+j-1}^{(\nu-1)} A^{i+j-1}_{i+j} y_{i+j} \\
&= \sum_{j=1}^{\nu-1} \left( C_{i,i+j}^{(\nu-1)}  A^{i+j}_{i+j} + C_{i,i+j-1}^{(\nu-1)} A^{i+j-1}_{i+j}  \right) y_{i+j}\nonumber  \\ &+ C^{(\nu-1)}_{i,i} A^{i}_{i} y_i + C^{(\nu-1)}_{i,i+\nu-1} A^{i+\nu-1}_{i+\nu} y_{i+\nu} \\
&= \sum_{j=1}^{\nu-1} C^{(\nu)}_{i,i+j} y_{i+j}  + C^{(\nu-1)}_{i,i} A^{i}_{i} y_i + C^{(\nu-1)}_{i,i+\nu-1} A^{i+\nu-1}_{i+\nu} y_{i+\nu} \\
&= \sum_{j=0}^{\nu} C^{(\nu)}_{i,i+j} y_{i+j}.
\end{align*}
\end{subequations}
\end{proof}
\end{proposition}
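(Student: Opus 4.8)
The plan is to prove the formula by induction on the number of derivatives $\nu$, with the path-sum recurrence of Proposition \ref{prop:recproofpsk2} serving as the engine of the inductive step. The base cases are already in hand: $\nu = 0$ is the convention $C_{i,i}^{(0)} = \Idn^{[i]}$, the case $\nu = 1$ is exactly the embedding ODE \eqref{eq:QuadSystemInfDim} since $C_{i,i}^{(1)} = A^i_i$ and $C_{i,i+1}^{(1)} = A^i_{i+1}$, and $\nu = 2$ is the worked computation \eqref{eq:d2yEx1}. So I would fix $\nu \geq 3$ and assume the identity holds at order $\nu - 1$ for every starting site $i \geq 1$.

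For the inductive step I would differentiate the order-$(\nu-1)$ identity once more in $t$. Since every $C_{i,i+j}^{(\nu-1)}$ is a constant matrix, this pushes the derivative onto the blocks, giving $y_i^{(\nu)} = \sum_{j=0}^{\nu-1} C_{i,i+j}^{(\nu-1)} y_{i+j}'$. I would then replace each $y_{i+j}'$ by the right-hand side of \eqref{eq:QuadSystemInfDim} evaluated at site $i+j$, namely $A^{i+j}_{i+j} y_{i+j} + A^{i+j}_{i+j+1} y_{i+j+1}$. This produces two sums: a \emph{diagonal} sum whose $j$-th term carries $y_{i+j}$, and an \emph{off-diagonal} sum whose $j$-th term carries $y_{i+j+1}$. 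After shifting $j \to j-1$ in the second sum so that both are indexed by the same block $y_{i+j}$, I would collect the coefficient of each $y_{i+j}$.

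The crux is then purely algebraic bookkeeping. For an interior block $1 \leq j \leq \nu-1$ the coefficient of $y_{i+j}$ is $C_{i,i+j}^{(\nu-1)} A^{i+j}_{i+j} + C_{i,i+j-1}^{(\nu-1)} A^{i+j-1}_{i+j}$, which is precisely $C_{i,i+j}^{(\nu)}$ by part (1) of Proposition \ref{prop:recproofpsk2}. The two extreme blocks lie outside the range of that recurrence and must be matched by hand: the $j = 0$ term has no predecessor, so its coefficient is $C_{i,i}^{(\nu-1)} A^i_i$, the pure self-loop product $C_{i,i}^{(\nu)}$ of part (2); the $j = \nu$ term arises only from the off-diagonal sum, with coefficient $C_{i,i+\nu-1}^{(\nu-1)} A^{i+\nu-1}_{i+\nu}$, the maximal rightward path $C_{i,i+\nu}^{(\nu)}$ of part (3). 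Reassembling the three pieces yields $y_i^{(\nu)} = \sum_{j=0}^{\nu} C_{i,i+j}^{(\nu)} y_{i+j}$, closing the induction.

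The main obstacle I anticipate is not any single computation but the careful index management: ensuring the reindexed off-diagonal sum lines up block-for-block with the diagonal sum, and verifying that the two boundary terms are correctly peeled off and identified with the extremal path sums rather than being absorbed into, or double-counted by, the interior recurrence. A clean way to keep this honest is to observe that the last jump of any length-$\nu$ path from $i$ to $i+j$ is either a self-loop $A^{i+j}_{i+j}$ (preceded by a length-$(\nu-1)$ path to $i+j$) or a unit rightward jump $A^{i+j-1}_{i+j}$ (preceded by a length-$(\nu-1)$ path to $i+j-1$). This combinatorial dichotomy \emph{is} the two-term recurrence, and it makes the boundary behaviour transparent, since the self-loop branch contributes nothing when $j = \nu$ and the rightward branch contributes nothing when $j = 0$.
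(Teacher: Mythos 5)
Your proposal is correct and follows essentially the same route as the paper: induction on $\nu$ starting from the base cases $\nu=0,1,2$, differentiating the order-$(\nu-1)$ identity, substituting the embedding ODE, reindexing the off-diagonal sum, and identifying interior coefficients via the recurrence of Proposition \ref{prop:recproofpsk2} while peeling off the $j=0$ and $j=\nu$ boundary terms. The closing remark about the last jump being either a self-loop or a unit rightward jump is exactly the combinatorial content of that recurrence, so nothing is missing.
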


Using \eqref{eq:yiAnalyticDev}-\eqref{eq:yichicoeff} and Proposition \ref{prop:derxik2}, it follows that
\begin{align}
y_i(t) =  \sum_{\nu=0}^\infty \chi_{i,\nu}  \frac{t^\nu}{\nu!} &= \sum_{\nu=0}^\infty \left. \frac{d^\nu y_i}{dt^\nu} \right\vert_{t=0}   \frac{t^\nu}{\nu!} \nonumber \\ &=
 \sum_{\nu=0}^\infty \left( \sum_{j=0}^\nu C^{(\nu)}_{i,i+j} y_{i+j}(0)  \right) \frac{t^\nu}{\nu!} \nonumber \\ 
&= \sum_{\nu=0}^\infty   \sum_{j=0}^\nu C^{(\nu)}_{i,i+j} x_{0}^{[i+j]} \frac{t^\nu}{\nu!}. \label{eq:yi_analytic}
\end{align}

\subsubsection{Truncation of the power series}

How do we have to modify the path sum if we consider the truncated system \eqref{eq:QuadSystemTrunc}? We have to be careful with the links considered, since it does only make sense to include $C_{i,i+j}$ for $i+j \leq N$, because there are no links further than that. Let us again consider an example to build some intuition. 

\begin{example}
Let $\nu=2$ and $1\leq i \leq N$. Differentiating \eqref{eq:QuadSystemTrunc}, we find that
\begin{subequations}
\begin{align*}
\hat{y}''_i &= A^i_i \hat{y}'_i + A^i_{i+1} \hat{y}'_{i+1}\delta_{i\leq N-1} \\
&= A^i_i \left( A^i_i \hat{y}_i + A^i_{i+1} \hat{y}_{i+1} \delta_{i\leq N-1}\right)  + A^i_{i+1} \left( A^{i+1}_{i+1}  \hat{y}_{i+1} \delta_{i\leq N-1} \right. \\ &\qquad + \left. A^{i+1}_{i+2} \hat{y}_{i+2}\delta_{i\leq N-1}\delta_{i\leq N-2}\right)  \\
&= \left( A^i_i  A^i_i \right) \hat{y}_i + \left( A^i_i A^i_{i+1} + A^i_{i+1} A^{i+1}_{i+1} \right) \hat{y}_{i+1}\delta_{i\leq N-1}  \\ &\qquad+ \left( A^i_{i+1}A^{i+1}_{i+2} \right)  \hat{y}_{i+2}\delta_{i\leq N-2} \\
&= C_{i,i}^{(2)} \hat{y}_i + C_{i,i+1}^{(2)} \hat{y}_{i+1}\delta_{i\leq N-1}+ C_{i,i+2}^{(2)}  \hat{y}_{i+2}\delta_{i\leq N-2}. 
\end{align*}
\end{subequations}
Here we recognise the path sums, although as remarked above, the Kronecker deltas are there to cut some terms from the expansion if $i$ is sufficiently high. 
\end{example}
 
The higher-order derivatives of the truncated system \eqref{eq:QuadSystemTrunc} satisfy
\begin{equation} 
\frac{d^\nu \hat{y}_i}{dt^\nu}(t) = \sum_{j=0}^{\nu} C_{i,i+j}^{(\nu)} y_{i+j}(t)\delta_{i \leq N-j}.  \label{eq:devderlk2pTrunc}
\end{equation}
This is proved in the same way as we did for Proposition \ref{prop:derxik2}. Hence, for arbitrary $1 \leq i \leq N$ we can write
\begin{align}
\hat{y}_i(t) =  \sum_{\nu=0}^\infty \xi_{i,\nu}  \frac{t^\nu}{\nu!} &= \sum_{\nu=0}^\infty \left. \frac{d^\nu \hat{y}_i}{dt^\nu} \right\vert_{t=0}   \frac{t^\nu}{\nu!} \nonumber \\
&= \sum_{\nu=0}^\infty\left( \sum_{j=0}^{\nu}  C^{(\nu)}_{i,i+j} x_{0}^{[i+j]}\delta_{i\leq N-j}\right)~ \frac{t^\nu}{\nu!}. \label{eq:yihat_analytic}
\end{align}

\subsubsection{Power series of the error}

Using the previous expressions \eqref{eq:yi_analytic} and \eqref{eq:yihat_analytic}, the error in the $i$-th block can be expanded as
\begin{subequations}
\begin{align*}
\eta_i = y_i - \hat{y}_i &= \sum_{\nu=0}^\infty (\chi_{i,\nu} - \hat{\chi}_{i,\nu} ) \frac{t^\nu}{\nu!} \\
&= \sum_{\nu=0}^\infty \left[ \sum_{j=0}^\nu \left( C^{(\nu)}_{i,i+j} - C^{(\nu)}_{i,i+j} \delta_{j\leq N-i} \right)  x_{0}^{[i+j]} \right] \frac{t^\nu}{\nu!} \\
&= \sum_{\nu=N-i+1}^\infty  \sum_{j=N-i+1}^\nu  C^{(\nu)}_{i,i+j}    x_{0}^{[i+j]}  \frac{t^\nu}{\nu!}. \label{eq:errork2dev}
\end{align*}
\end{subequations}
The error in the first block corresponds to setting $i=1$ above,
\begin{equation}
\varepsilon(t) =  \sum_{\nu=N}^\infty \sum_{j=N}^\nu C^{(\nu)}_{1,1+j} x_0^{[1+j]}  \frac{t^\nu}{\nu!} = \sum_{\nu=0}^\infty \xi_{\nu}  \frac{t^\nu}{\nu!}, \label{eq:errork2sumnu}
\end{equation}
where we have defined
\begin{equation}
\xi_\nu :=
\begin{cases} 0 &\text{ if } 0 \leq \nu \leq N-1\\ 
\sum\limits_{j=N}^{\nu} C^{(\nu)}_{1,1+j}x_0^{[1+j]} &\text{ if } \nu \geq  N
\end{cases} \label{eq:errork2xi}
\end{equation}
To conclude the proof, in the remaining of this section we find a condition on the behaviour with $N$ on the coefficients $\xi_\nu$, quantified in terms of their norm $||\xi_\nu||$ for increasing $N$. 

\subsubsection{Solution using generating functions}

In this subsection we are considering the case $k=2$, hence the path order is at most $1$, that is, we set once and for all $q=1$. The objects $C^{(\nu)}_{i,i+j}$ are defined for all $i\geq 1$, $\nu \geq 1$, $0\leq j \leq \nu$. They satisfy the general recurrence formula (here $i\geq 1$, $\nu \geq 2$, $1 \leq j \leq \nu$)
\begin{equation}
C^{(\nu)}_{i,i+j} = C^{(\nu-1)}_{i,i+j-1}C^{(1)}_{i+j-1,i+j} + C^{(\nu-1)}_{i,i+j}C^{(1)}_{i+j,i+j} \delta_{j \leq \nu-1}. \label{eq:recformk2}
\end{equation}
For all $i\geq 1$, $\nu \geq 1$, $0 \leq j \leq \nu$ consider the sequence of norms
\begin{equation*}
c_{i,\nu, j} = ||C^{(\nu)}_{i,i+j}||.
\end{equation*}
The border conditions are $c_{i,1,0} \leq i ||F_1||$, $c_{i,1,1} \leq i ||F_2||$, and for $\nu \geq 1$, $c_{i,\nu,0} \leq (i ||F_1||)^\nu$. Taking the norm on both sides of the recurrence formula \eqref{eq:recformk2} and applying the triangular inequality, we obtain
\begin{equation}
c_{i,\nu,j} \leq c_{i,\nu-1,j-1} (i+j-1) ||F_2|| + c_{i,\nu-1,j} (i+j)||F_1|| \delta_{j \leq \nu-1}. \label{eq:recurrenceThreeIndices}
\end{equation}
The proof of the following key Lemma is presented in Appendix \ref{app:ProofRecurrencePowerSeries}.
\begin{lemma} \label{lemma:recurrenceK2}
The coefficients $c_{i,\nu,j}$ satisfy, for all $i\geq 1$, $\nu \geq 0$, $0 \leq j \leq \nu$,
\begin{equation}
c_{i,\nu,j} \leq ||F_1||^{\nu-j}~||F_2||^j  \binom{i+j-1}{j} \sum_{k=0}^j \binom{j}{k}  (-1)^{j-k} (i+k)^\nu.  \label{eq:RecurrenceResult}
\end{equation}
\end{lemma}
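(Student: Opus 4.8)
The plan is to prove the bound by induction on $\nu$, treating the claimed right-hand side of \eqref{eq:RecurrenceResult} as a candidate supersolution of the recurrence \eqref{eq:recurrenceThreeIndices}. Write
\begin{equation*}
B_{i,\nu,j} := \norm{F_1}^{\nu-j}\norm{F_2}^j \binom{i+j-1}{j}\sum_{k=0}^j \binom{j}{k}(-1)^{j-k}(i+k)^\nu,
\end{equation*}
so that the goal is $c_{i,\nu,j}\le B_{i,\nu,j}$. First I would dispatch the base cases: for $\nu=0$ only $j=0$ occurs and $c_{i,0,0}=\norm{\Idn^{[i]}}=1=B_{i,0,0}$; for $\nu=1$ a direct evaluation gives $B_{i,1,0}=i\norm{F_1}$ and $B_{i,1,1}=i\norm{F_2}$, matching the stated border conditions; and for every $\nu$ the column $j=0$ gives $B_{i,\nu,0}=(i\norm{F_1})^\nu$, which is exactly the border bound $c_{i,\nu,0}\le (i\norm{F_1})^\nu$.

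The heart of the argument is to show that $B$ satisfies \eqref{eq:recurrenceThreeIndices} with equality, i.e.\ that for $1\le j\le \nu$
\begin{equation*}
B_{i,\nu-1,j-1}\,(i+j-1)\norm{F_2} + B_{i,\nu-1,j}\,(i+j)\norm{F_1} = B_{i,\nu,j}.
\end{equation*}
Once this is established, applying the inductive hypothesis $c_{i,\nu-1,j'}\le B_{i,\nu-1,j'}$ to the two summands of \eqref{eq:recurrenceThreeIndices} yields $c_{i,\nu,j}\le B_{i,\nu,j}$ at once, since the multipliers $(i+j-1)\norm{F_2}$ and $(i+j)\norm{F_1}$ are nonnegative. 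To prove the displayed identity I would factor out the common prefactor $\norm{F_1}^{\nu-j}\norm{F_2}^j\binom{i+j-1}{j}$ from both sides; using $(i+j-1)\binom{i+j-2}{j-1}=j\binom{i+j-1}{j}$ on the first term, the identity collapses to the purely combinatorial statement
\begin{equation*}
j\sum_{k=0}^{j-1}\binom{j-1}{k}(-1)^{j-1-k}(i+k)^{\nu-1} + (i+j)\sum_{k=0}^{j}\binom{j}{k}(-1)^{j-k}(i+k)^{\nu-1} = \sum_{k=0}^{j}\binom{j}{k}(-1)^{j-k}(i+k)^\nu.
\end{equation*}
To verify this I would rewrite the first sum via $j\binom{j-1}{k}=(j-k)\binom{j}{k}$, extend its range to $k=j$ (the extra term vanishes since $j-k=0$), and add the two sums term by term; the coefficient of $\binom{j}{k}(-1)^{j-k}(i+k)^{\nu-1}$ becomes $-(j-k)+(i+j)=i+k$, which reconstitutes $(i+k)^\nu$ on the right.

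The one point requiring care is the Kronecker delta in \eqref{eq:recurrenceThreeIndices}, which suppresses the second summand precisely when $j=\nu$. I would handle this by observing that the inner sum in $B_{i,\nu-1,j}$ is the $j$-th forward finite difference at $0$ of the polynomial $k\mapsto(i+k)^{\nu-1}$; when $j=\nu$ this difference has order strictly greater than the degree $\nu-1$ and therefore vanishes, so $B_{i,\nu-1,\nu}=0$. Consequently the recurrence identity still closes at the boundary: for $j=\nu$ the term that the delta removes is exactly $B_{i,\nu-1,\nu}(i+j)\norm{F_1}=0$, so dropping it introduces no discrepancy. The main obstacle is thus not any single estimate but the bookkeeping that ties together the binomial manipulations in the combinatorial identity and the finite-difference vanishing at $j=\nu$, ensuring that the delta in the recurrence and the automatic vanishing of the candidate bound agree. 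With these in place the induction closes and \eqref{eq:RecurrenceResult} follows for all $i\ge 1$, $\nu\ge 0$, $0\le j\le \nu$.
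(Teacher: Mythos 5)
Your proof is correct, and it takes a genuinely different route from the paper. The paper derives the bound: for each fixed $j$ it forms the generating function $B_j(z)=\sum_{\nu\ge j}c_{\nu,j}z^\nu$, turns the recurrence \eqref{eq:recurrenceThreeIndicesDisplaced} into the functional inequality $B_{j+1}(z)\le \frac{z(i+j)\norm{F_2}}{1-z(i+j+1)\norm{F_1}}B_j(z)$, solves it as a product, and extracts the coefficient of $z^\mu$ by partial-fraction decomposition, which is where the alternating binomial sum comes from. You instead take the closed form as given and verify by induction on $\nu$ that it is an exact (super)solution of \eqref{eq:recurrenceThreeIndices}: the reduction of the two-term recurrence to the combinatorial identity via $(i+j-1)\binom{i+j-2}{j-1}=j\binom{i+j-1}{j}$ and $j\binom{j-1}{k}=(j-k)\binom{j}{k}$ checks out, the coefficient bookkeeping $-(j-k)+(i+j)=i+k$ is right, and your treatment of the boundary $j=\nu$ --- observing that the inner sum in $B_{i,\nu-1,\nu}$ is a $\nu$-th finite difference of a degree-$(\nu-1)$ polynomial and hence vanishes, so the term killed by $\delta_{j\le\nu-1}$ contributes nothing anyway --- is exactly the point that needed care, and you handle it cleanly. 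What the paper's approach buys is a derivation that produces the formula rather than presupposing it; what yours buys is brevity and elementarity (no generating functions, no partial fractions, no coefficient extraction from a power-series inequality), at the cost of having to guess the answer in advance. Both establish the lemma.
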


From \eqref{eq:errork2sumnu}-\eqref{eq:errork2xi}, we deduce that $||\varepsilon(t)|| \leq \sum_{\nu=0}^\infty ||\xi_\nu|| \dfrac{t^\nu}{\nu!}$, and $\xi_\nu = 0$ for all $\nu < N$. For $\nu \geq N$, we deduce from Lemma \ref{lemma:recurrenceK2} the estimate
\begin{subequations}
\begin{align*}
||\xi_\nu|| &\leq  \sum_{j=N}^\nu || C^{(\nu)}_{1,1+j} x_0^{[1+j]}|| \\ 
&\leq \sum_{j=N}^\nu ||F_1||^{\nu-j}~||F_2||^j  \sum_{k=0}^j \binom{j}{k}  (-1)^{j-k} (1+k)^\nu ||x_0||^{1+j}. 
\end{align*}
\end{subequations}
Substitution into the error series yields $||\varepsilon(t)|| \leq R_N(t)$, where
\begin{equation}
R_N(t) :=  \sum_{\nu=N}^\infty \sum_{j=N}^\nu ||F_1||^{\nu-j}~||F_2||^j  \sum_{k=0}^j \binom{j}{k}  (-1)^{j-k} (1+k)^\nu ||x_0||^{1+j} \dfrac{t^\nu}{\nu!}. \label{eq:e1i1k2eval}
\end{equation}
This infinite series can be explicitly computed using Egorychev's method for the evaluation of binomial coefficient sums using complex analysis \cite{egorychev1984integral}.

\begin{proof}[Proof of Theorem 2.] Let $\beta_0 = \frac{||x_0||~||F_2||}{||F_1||}$, and let us rewrite \eqref{eq:e1i1k2eval} as
\begin{equation}
R_N(t) = ||x_0|| \sum_{\nu=N}^\infty \frac{(\norm{F_1}t)^\nu}{\nu!}  \sum_{j=N}^\nu \beta_0^j \sum_{k=0}^j \binom{j}{k} (-1)^{j-k} (1+k)^\nu  \label{eq:RNt}
\end{equation}
It is well-known that
\begin{equation*}
(1+k)^\nu =
\frac{\nu!}{2\pi \ii}
\oint_{|z|=\epsilon} \dfrac{e^{(1+k)z}}{z^{\nu+1}}  \dd z.
\end{equation*}
For the inner sum, we get
\begin{equation*}
\sum_{k=0}^j \binom{j}{k} (-1)^{j-k} (1+k)^\nu = \frac{\nu!}{2\pi\ii} \oint_{|z|=\epsilon} \dfrac{e^z}{z^{\nu+1}} (e^z-1)^j \dd z.
\end{equation*}
Performing the sum\footnote{Recall the formula for the shifted sum,
\begin{equation*}
\sum_{j=a}^b x^j =  \frac{x^a-x^{b+1}}{1-x},\qquad 0\leq a \leq b
\end{equation*}} over $j$ in \eqref{eq:RNt}, we get
\begin{equation*}
\frac{\nu!}{2\pi \ii}
\oint_{|z|=\epsilon} \dfrac{e^z}{z^{\nu + 1}} \dfrac{\left[ \beta_0(e^z-1)\right]^{\nu+1} - \left[ \beta_0(e^z-1)\right]^{N} }{\beta_0 e^z-(1+\beta_0)} \dd z.
\end{equation*}
Now since $e^z - 1$ starts at $\nu$, by Cauchy's integral theorem the first term drops out and we get
\begin{equation*}
\frac{\nu!}{2\pi \ii}
\oint_{|z|=\epsilon} \dfrac{e^z}{z^{\nu + 1}} \dfrac{\left[ \beta_0(e^z-1)\right]^{N}}{(1+\beta_0) - \beta_0 e^z} \dd z.
\end{equation*}
We thus get for the remaining sum
\begin{equation*}
\sum_{\nu=N}^\infty \frac{(\norm{F_1}t)^\nu}{\nu!} \frac{\nu!}{2\pi \ii} \oint_{|z|=\epsilon} \dfrac{e^z}{z^{\nu + 1}} \dfrac{\left[ \beta_0(e^z-1)\right]^{N}}{(1+\beta_0) - \beta_0 e^z} \dd z.
\end{equation*}
Finally note  that $(e^z-1)^N$ starts at $z^N$ so (again by Cauchy's integral theorem) for all values $0 \leq \nu \leq N-1$ the integral vanishes, hence we may lower the initial value of the remaining summation to zero without changing its value, getting
\begin{equation*}
\sum_{\nu=0}^\infty (\norm{F_1}t)^\nu \frac{1}{2\pi \ii}
\oint_{|z|=\epsilon} \frac{e^z}{z^{\nu+1}} 
\frac{\left[ \beta_0(e^z-1)\right]^N}{(1+\beta_0) - \beta_0 e^z } \dd z.
\end{equation*}
We deduce that 
\begin{equation*}
R_N(t) =  \dfrac{||x_0|| e^{||F_1||t}}{(1+\beta_0) - \beta_0 e^{||F_1||t}} \left[ \beta_0 (e^{||F_1||t}-1)\right]^N.
\end{equation*}
The distance to the nearest singularity is $||F_1||^{-1}\ln (1+\beta_0^{-1})$, so the radius of convergence of the series is 
\begin{equation*}
|t| < T^\ast := \frac{1}{||F_1||}\ln \left( 1 + \dfrac{1}{\beta_0} \right).
\end{equation*}
Moreover, 
\begin{equation*}
\lim_{N\to \infty} R_N(t) = 0,\qquad \text{~for all~} t < T^\ast.
\end{equation*}

\end{proof}

\section{Conclusion}
\label{sec:Conclusion}

In this paper we have found explicit error bounds for the solution obtained by truncation at finite orders of the infinite-dimensional Carleman embedding, in the case of polynomial ODEs. We have shown that these error bounds provide a reasonably good estimate in the convergence region, but for practical application of this method, let us raise some questions, which are left for future work:

\begin{itemize}
	\item The error estimate is a time-dependent function computed by expanding a solution around some initial value, hence the accuracy of the error formula depends strongly on the initial condition. The range of validity could be extended, for instance, by space discretization \cite{weber2016adapting}. A different approach would be to discretize in time, thus having a (single) global linearization over a set of timed switches.  
	
	\item We have used monomials basis to perform the Carleman linearization, for ease of notation and theoretical manipulations. However, more accurate finite-dimensional approximations may be obtained by using another set of basis functions, such as Chebyshev polynomials, as already hinted in \cite{bellman1962some}.
	
	\item We have considered the simplifying assumption that zero is an equilibrium point of the nonlinear IVP \eqref{eq:S_ode}. However, the methodology could be extended to handle input functions $u(t) \in \mathcal{U}$, piecewise continuous and valued over a bounded set $\mathcal{U}\subset\R^m$. The Carleman linearization scheme can be constructed accordingly \cite{kowalski1991nonlinear}.
	\end{itemize}

\subsection*{Acknowledgements} M.F. acknowledges stimulating discussions with Goran Frehse, Thao Dang and Victor Magron at the beginning stages of this work. We are indebted to Pablo Rotondo for help in Lemma \ref{lemma:RecurrentIntegralTN}, to Iosif Pinelis for advice on generating function inequalities, and to Marko Riedel for valuable insight into Egorychev's method.
\bibliographystyle{abbrv} 
\bibliography{carlin}

\clearpage 

\appendix

\section{Proof of Lemma \ref{lemma:RecurrentIntegralTN}}
\label{app:ProofPropIntegrals}

%

\begin{proof}
Recall that we defined
\begin{align}
\begin{split}
G_N(a, t) &:= \int_{0}^t  e^{a (t-s_{N-1})}   \int_{0}^{s_{N-1}}  e^{2a(s_{N-1} - s_{N-2})} \\
&\cdots  \int_{0}^{s_2}  e^{(N-1)a(s_2 - s_1)}   \int_{0}^{s_1} e^{Na(s_1 - s)} \dd s\dd s_{1}\cdots\dd s_{N-1}.
\end{split}
\end{align}
Observe that $G_N(a,s_N)$ is continuous in $a$ since $$(a,s_0,\ldots,s_N)\mapsto e^{a \left( -Ns_0 +  \sum\limits_{i=1}^{N} s_i \right)}$$
is continuous, integrable and bounded by an integrable function of $(s_0,\ldots,s_N)$ on
every compact set for each $a \in \R$. Thus we can assume that $a\neq0$ in what follows and
conclude by continuity on $0$.

Define $X=\{(s_0,\ldots,s_{N-1}):0\leqslant s_0\leqslant s_1\leqslant\ldots\leqslant s_{N-1}\leqslant s_N\}$ and observe that
\begin{subequations}
\begin{align*}
G_N(s_N) &= \int_{0}^{s_N} \cdots \int_{0}^{s_2} \int_{0}^{s_1} 
    e^{a \left( -Ns_0 +  \sum_{i=1}^{N} s_i \right)} \dd s_0 \dd s_1 \ldots \dd s_{N-1}\\
    &= \int_X \underbrace{e^{a\sum\limits_{i=1}^{N} s_i}}_{:=f(s)}
        \underbrace{\int_{0}^{s_1}e^{-a Ns_0}\dd s_0}_{:=g(s_0)} \dd s\\
    &=\int_Xf(s)g(s_0)\dd s.
\end{align*}
\end{subequations}
For any permutation $\sigma$, let $x_\sigma=\sigma(x)$ and $X_\sigma=\{x_\sigma:x\in X\}$.
Now observe that $f(s)$ is symmetric in $s_1,\ldots,s_{N-1}$, that is $f(s)=f(s_\sigma)$ for any permutation
$\sigma$ of $\{0,\ldots,N-1\}$ that leaves $0$ unchanched. Let $\mathcal{G}$ be the
set of such permutations.
Furthermore, for any two distincts permutations $\sigma\neq\sigma'$, the set $X_\sigma\cap X_{\sigma'}$ has
empty interior. It follows from this that for any function $F$ we have
\begin{equation}\label{eq:integral_perm_sum}
\sum_{\sigma\in\mathcal{G}}\int_{X_\sigma}F(x)\dd x=\int_{\bigcup_{\sigma\in\mathcal{G}}X_\sigma}F(x)\dd x.
\end{equation}
But using symmetry, we also have that
\begin{subequations}
\begin{align}
\sum_{\sigma\in\mathcal{G}}\int_{X_\sigma}f(s)g(s_0)\dd s
    &=\sum_{\sigma\in\mathcal{G}}\int_{X}f(s_\sigma)g(s_{\sigma(0)})\dd s\\
    &=\sum_{\sigma\in\mathcal{G}}\int_{X}f(s)g(s_0)\dd s\\
    &=(N-1)!\int_{X}f(s)g(s_0)\dd s\label{eq:sum_int_perm_fg}.
\end{align}
\end{subequations}
Finally, observe that:
\begin{equation}\label{eq:union_perm_X}
E=\bigcup_{\sigma\in\mathcal{G}}X_\sigma=[0,s_N]^N\cap\big\{(s_0,\ldots,s_{N-1}):s_0\leqslant\min(s_1,\ldots,s_{N-1})\big\}.
\end{equation}
This can be seen by double inclusion: for any $s\in X$ and $\sigma\in\mathcal{G}$
we have, by definition, $s_0\leqslant\min(s_1,\ldots,s_{N-1})$. Since $\sigma(0)=0$ then
$$s_0\leqslant\min(s_{\sigma(1)},\ldots,s_{\sigma(N-1)})=\min(s_1,\ldots,s_{N-1}).$$
Conversely, consider $s$ such that $s_0\leqslant\min(s_1,\ldots,s_{N-1})$. Then
take $\sigma$ such that $s_{\sigma(1)}\leqslant s_{\sigma(2)}\leqslant\ldots\leqslant s_{\sigma(N-1)}$
(just sort the components of $s$) then $s\in X_\sigma$.
Putting together \eqref{eq:integral_perm_sum}, \eqref{eq:sum_int_perm_fg} and \eqref{eq:union_perm_X},
we get that:
\begin{subequations}
\allowdisplaybreaks
\begin{align*}
\int_Xf(s)g(s_0)\dd s
    &=\frac{1}{(N-1)!}\int_E f(s)g(s_0)\dd s\\
    &=\frac{1}{(N-1)!}\int_{0}^{s_N}\int_{s_0}^{s_N}\cdots \int_{s_0}^{s_N}
        f(s)g(s_0)\dd s_1\cdots\dd s_{N-1}\dd s_0\\
    &=\frac{1}{(N-1)!}\int_{0}^{s_N}e^{-a Ns_0}\int_{s_0}^{s_N}\cdots\int_{s_0}^{s_N}
        e^{a\sum_{i=1}^{N}s_i}\dd s_1\cdots\dd s_{N-1}\dd s_0\\
    &=\frac{e^{a s_N}}{(N-1)!}\int_{0}^{s_N}e^{-a Ns_0}\left(\int_{s_0}^{s_N}
        e^{a u}\dd u\right)^{N-1}\dd s_0\\
    &=\frac{e^{a s_N}}{(N-1)!}\int_{0}^{s_N}e^{-a Ns_0}\left(\frac{e^{s_Na}-e^{s_0a}}{a}\right)^{N-1}\dd s_0.
\end{align*}
\end{subequations}
Developing the integrand with the binomial formula and integrating in $s_0$,
\begin{align*}
G_N(s_N) &= \frac{e^{a s_N}}{a^{N-1}}\int_{0}^{s_N}e^{-a Ns_0}\sum_{k=0}^{N-1}\binom{N-1}{k}\frac{(-1)^{N-1-k}e^{ks_Na+(N-1-k)a s_0}}{(N-1)!}\dd s_0\\
    &=\frac{e^{a s_N}}{a^{N-1}}\sum_{k=0}^{N-1}\frac{(-1)^{N-1-k}e^{ka s_N}}{(N-1-k)!k!}\int_{0}^{s_N}e^{-a(k+1)s_0}\dd s_0\\
    &=\frac{e^{a s_N}}{a^{N-1}}\sum_{k=0}^{N-1}\frac{(-1)^{N-1-k}e^{ka s_N}}{(N-1-k)!k!}\left( \frac{1-e^{-a(k+1)s_N}}{a(k+1)}\right).
\end{align*}
Finally, the factored form can be found by expanding the product and reordering,
\begin{align*}
G_N(s_N) &=\frac{1}{a^{N}}\sum_{k=1}^{N}\frac{(-1)^{N-k}e^{ka s_N}}{(N-k)!k!}
-\frac{1}{a^{N}}\sum_{k=1}^{N}\binom{N}{k}\frac{(-1)^{N-k}}{N!}\\
&=\frac{1}{a^{N}}\sum_{k=1}^{N}\frac{(-1)^{N-k}e^{ka s_N}}{(N-k)!k!}
+\frac{(-1)^N}{a^NN!}
-\frac{1}{a^{N}}\underbrace{\sum_{k=0}^{N}\binom{N}{k} \frac{(-1)^{N-k}}{N!}}_{=0}\\
&=\frac{1}{a^{N}}\sum_{k=0}^{N}\frac{(-1)^{N-k}e^{ka s_N}}{(N-k)!k!} = \dfrac{( e^{a s_N} - 1)^N}{N! a^{N}}.
\end{align*}

\end{proof}

\section{Proof of Lemma \ref{lemma:recurrenceK2}}
\label{app:ProofRecurrencePowerSeries}

\begin{proof}
The cases $\nu=0$ and $\nu=1$ trivially verify \eqref{eq:RecurrenceResult}, by a direct application of Proposition \ref{prop:recproofpsk2}. For the general case, let us assume without loss of generality that $\nu \geq 2$. Moreover, since the subindex $i$ stays fixed, we set $c_{\nu,j} \equiv c_{i,\nu,j}$. It is convenient to displace the recurrence formula \eqref{eq:recurrenceThreeIndices} by $1$ in $\nu$, so that
\begin{equation}
c_{\nu+1,j+1} \leq c_{\nu,j} (i+j) \norm{F_2} + c_{\nu,j+1} (i+j+1)\norm{F_1} \delta_{j \leq \nu-1} \label{eq:recurrenceThreeIndicesDisplaced}
\end{equation}
for $i\geq 1$, $\nu \geq 1$, $0 \leq j \leq \nu$.

Consider, for each $j\geq 0$, the generating function 
\begin{align*}
B_j(z) &= \sum_{\nu=j}^\infty z^\nu c_{\nu,j} = z^j c_{j,j} + z^{j+1} c_{j+1,j} + z^{j+2} c_{j+2,j} + \ldots. 
\end{align*}
Here $z$ is a formal (complex) parameter. By definition we set $c_{0,0} = 1$.  For $j=0$, 
\begin{equation*}
B_0(z) = 1 + z c_{1,0} + z^2 c_{2,0} + \ldots \leq \sum_{\nu=0}^\infty z^\nu (i\norm{F_1})^\nu = \frac{1}{1 - iz\norm{F_1}}.
\end{equation*}
Multiplying on both sides by $z^\nu$ and summing from $\nu=j$ to $\infty$, 
\begin{equation*}
\underbrace{\sum_{\nu=j}^\infty z^\nu c_{\nu+1,j+1}}_{\encircle{$1$}} \leq \underbrace{\sum_{\nu=j}^\infty z^\nu c_{\nu,j} (i+j) \norm{F_2}}_{\encircle{$2$}} + \underbrace{\sum_{\nu=j}^\infty z^\nu c_{\nu,j+1} (i+j+1)\norm{F_1} \delta_{j \leq \nu-1}}_{\encircle{$3$}}.
\end{equation*}
Each of these terms can be evaluated explicitly. The left-most term is
\begin{equation*}
	\encircle{$1$} = z^{-1} (z^{j+1} c_{j+1,j+1} + z^{j+2}c_{j+2,j+1} + \ldots ) = z^{-1} B_{j+1}(z).
\end{equation*}
Then $\encircle{$2$} = (i+j) \norm{F_2} B_j(z)$, and finally
\begin{equation*}
\encircle{$3$} = (i+j+1)\norm{F_1} \sum_{\nu=j+1}^\infty z^\nu c_{\nu,j+1} = (i+j+1)\norm{F_1} B_{j+1}(z).
\end{equation*}
Rearranging and multiplying by $z$,
\begin{equation*}
B_{j+1}(z) (1-z(i+j+1)\norm{F_1}) \leq z(i+j) \norm{F_2} B_j(z),
\end{equation*}
and for sufficiently small $z$,
\begin{equation*}
B_{j+1}(z) \leq \frac{z(i+j) \norm{F_2}}{1-z(i+j+1)\norm{F_1}} B_j(z).
\end{equation*}
Consequently, for any $ j \geq 1$,
\begin{align*}
B_j(z) &\leq \prod_{k=0}^{j-1} \frac{z(i+k)\norm{F_2}}{1-z(i+k+1)\norm{F_1}}~ B_0(z) \\
&\leq \frac{1}{1 - iz\norm{F_1}} \prod_{k=0}^{j-1} \frac{z(i+k)\norm{F_2}}{1-z(i+k+1)\norm{F_1}}.
\end{align*}
Since $\left. \dfrac{\partial^{(j)}}{\partial z^j} z^j \right\vert_{z=0} = j!$ for all $j\geq 1$, then
\begin{equation*}
\left. \frac{\partial^{(j)}}{\partial z^j} B_j(z)\right\vert_{z=0} = \left. \frac{\partial^{(j)}}{\partial z^j} \sum_{\nu=j}^\infty  c_{\nu,j} z^\nu\right\vert_{z=0}  = j! c_{j,j}.
\end{equation*}
Moreover for all $\mu \geq j$, 
\begin{equation*}
\left. \frac{\partial^{(\mu)}}{\partial z^\mu} B_j(z)\right\vert_{z=0} = \left. \frac{\partial^{(\mu)}}{\partial z^\mu} \sum_{\nu=j}^\infty  c_{\nu,j} z^\nu\right\vert_{z=0}  = \mu! c_{\mu,j}.
\end{equation*}
Consequently, for all $j \geq 1$ and $\mu \geq j$, 
\begin{align}\label{eq:cmuj_estimate}
c_{\mu,j} &\leq \frac{1}{\mu!} \frac{\partial^{(\mu)}}{\partial z^\mu} \left. \left\{ \frac{1}{1 - iz\norm{F_1}} \prod_{k=0}^{j-1} \frac{z(i+k)\norm{F_2}}{1-z(i+k+1)\norm{F_1}} \right\} \right\vert_{z=0} \nonumber \\
&= \frac{1}{\mu!} \frac{\partial^{(\mu)}}{\partial z^\mu} \left. \left\{ \frac{(i+j-1)!}{(i-1)!}z^j \norm{F_2}^j \prod_{k=0}^{j} \frac{1}{1-z(i+k)\norm{F_1}} \right\} \right\vert_{z=0} \nonumber \\
&= \frac{(i+j-1)!}{(i-1)!}\norm{F_2}^j \frac{1}{\mu!} \frac{\partial^{(\mu)}}{\partial z^\mu} \left. \left\{ z^j \prod_{k=0}^{j} \frac{1}{1-z(i+k)\norm{F_1}} \right\} \right\vert_{x=0}. 
\end{align}
Let $w = z\norm{F_1}$. By partial fraction decomposition,
\begin{align*}
\prod_{k=0}^j \frac{1}{1-w(i+k)} = \sum_{k=0}^j \frac{\alpha_k}{1-w(i+k)}.
\end{align*}
Matching for $k=0,\ldots,j$ on both sides of the equation, the $\alpha_k$ are given by
\begin{align*}
\alpha_k = \prod_{r=0,r\neq k}^{j} \left. \frac{1}{1-w(i+r)} \right\vert_{w=\frac{1}{i+k}} = \prod_{r=0,r\neq k}^{j} \frac{i+k}{k-r}  = \frac{(i+k)^j (-1)^{j-k}}{k!(j-k)!}.
\end{align*}
Finally, 
\begin{align}\label{eq:partial_frac_explicit}
\frac{1}{\mu!} \frac{\partial^{(\mu)}}{\partial z^\mu} \left. \left\{ z^j \prod_{k=0}^{j} \frac{1}{1-z(i+k)\norm{F_1}} \right\} \right\vert_{z=0}  &= [z^\mu] \left\{ z^j \prod_{k=0}^{j} \frac{1}{1-z(i+k)\norm{F_1}} \right\} \nonumber \\ &= [z^{\mu-j}] \left\{  \prod_{k=0}^{j} \frac{1}{1-z(i+k)\norm{F_1}} \right\} \nonumber \\ &= [z^{\mu-j}] \left\{  \sum_{k=0}^j \frac{\alpha_k}{1-z(i+k)\norm{F_1}} \right\} \nonumber \\ &= \sum_{k=0}^j \alpha_k [z^{\mu-j}] \left\{  \frac{1}{1-z(i+k)\norm{F_1}} \right\} \nonumber\\ &= \sum_{k=0}^j \alpha_k (i+k)^{\mu-j}\norm{F_1}^{\mu-j}.
\end{align}
Combining \eqref{eq:cmuj_estimate} with \eqref{eq:partial_frac_explicit}, and after some rearrangements, we arrive at the desired result,
\begin{align*}
c_{\mu,j} &\leq \frac{(i+j-1)!}{(i-1)!}\norm{F_2}^j \sum_{k=0}^j \alpha_k (i+k)^{\mu-j}\norm{F_1}^{\mu-j} \\ 
&= \frac{(i+j-1)!}{(i-1)!}\norm{F_2}^j \sum_{k=0}^j \frac{(i+k)^j (-1)^{j-k}}{k!(j-k)!} (i+k)^{\mu-j}\norm{F_1}^{\mu-j} \\ 
&= \frac{(i+j-1)!}{(i-1)!}\norm{F_1}^{\mu-j}~\norm{F_2}^j \sum_{k=0}^j \frac{(i+k)^\mu (-1)^{j-k}}{k!(j-k)!} \\ 
&= \norm{F_1}^{\mu-j}~\norm{F_2}^j  \binom{i+j-1}{j} \sum_{k=0}^j \binom{j}{k}  (-1)^{j-k} (i+k)^\mu. 
\end{align*}
\end{proof}

\end{document}